\definecolor{darkblue}{rgb}{0.0, 0.0, 0.55}
\crefname{section}{§}{§§}
\renewcommand{\qedsymbol}{\rule[.12ex]{1.2ex}{1.2ex}}
\renewcommand{\subset}{\subseteq}
\renewcommand{\emptyset}{\varnothing}
\newtheorem{theorem}{Theorem}[section]
\newtheorem{lemma}[theorem]{Lemma}
\newtheorem{prop}[theorem]{Proposition}
\newtheorem{remark}[theorem]{Remark}
\newtheorem{thm}[theorem]{Theorem}
\newtheorem{lem}[theorem]{Lemma}
\newtheorem*{lemma*}{Lemma}
\def\beq{\begin{equation}}
\def\eeq{\end{equation}}
\numberwithin{equation}{section}
\def\beq{\begin{equation}}
\def\eeq{\end{equation}}
\def\cD{ {{\mathcal D}}}
\def\cH{ {{\mathcal H}}}
\def\bbN{ {\mathbb N}}
\def\bbS{ {\mathbb S}}
\def\bbK{ {\mathbb K}}
\def\K{ {\mathbb K}}
\def\R{ {\mathbb{R}} }
\def\C{ {\mathbb{C}} }
\def\bbS{{\mathbb S}}
\def\cA{ {\mathcal A} }
\def\cD{ {\mathcal D} }
\def\cE{ {\mathcal E} }
\def\cH{ {\mathcal H} }
\def\beq{\begin{equation}}
\def\eeq{\end{equation}}
\newcommand{\df}[1]{{\bf{#1}}{\index{#1}}}
\def\hbeta{{\hat{\beta}}}
\def\hsigma{{\hat{\sigma}}}
\def\hgamma{{\hat{\gamma}}}
\def\hY{{\hat{Y}}}
\def\fK{{\mathfrak{K}}}
\def\fE{{\mathfrak{E}}}
\def\abs{\partial^{\mathrm{abs}}}
\def\arv{\partial^{\mathrm{Arv}}}
\def\comat{\mathrm{co}^\mathrm{mat}}
\def\cDAC{{\cD_A^{\mathbb{C}}}}
\def\cDAR{{\cD_A^{\mathbb{R}}}}
\def\cDAK{{\cD_A^{\mathbb{K}}}}
\def\smnrg{{SM_n(\R)^g}}
\def\smmrg{{SM_m(\R)^g}}
\def\smnkg{{SM_n(\K)^g}}
\def\smlkg{{SM_\ell(\K)^g}}
\def\smnonerg{{SM_{n_1}(\R)^g}}
\def\smnonekg{{SM_{n_1}(\K)^g}}
\def\smntworg{{SM_{n_2}(\R)^g}}
\def\smntwokg{{SM_{n_2}(\K)^g}}
\def\smnthreekg{{SM_{n_3}(\K)^g}}
\def\smkg{{SM(\K)^g}}
\def\smdrg{{SM_d(\R)^g}}
\def\smdkg{{SM_d(\K)^g}}
\def\smspecialtwokg{{SM_{2n(g+1)}(\K)^g}}
\def\epsilon{\varepsilon}
\def\bem{\begin{pmatrix}}
\def\eem{\end{pmatrix}}
\title{Arveson Extreme Points Span Free Spectrahedra}
\author[E. Evert]{Eric Evert${}^1$}
\address{Eric Evert, Department of Mathematics\\
  University of California \\
  San Diego
   }
   \email{eevert@ucsd.edu}
\thanks{${}^1$Research supported by the NSF grant
DMS-1500835}
\author[J.W. Helton]{J. William Helton${}^1$}
\address{J. William Helton, Department of Mathematics\\
  University of California \\
  San Diego
   }
   \email{helton@math.ucsd.edu}
\subjclass[2010]{Primary 46L07. Secondary 46L07, 90C22}
\date{\today}
\keywords{matrix convex set, extreme point, dilation theory,linear matrix inequality (LMI), spectrahedron, Arveson boundary, real algebraic geometry}
\begin{document}
 
\begin{abstract}
Let $ \smnrg$ denote $g$-tuples of $n \times n$ real symmetric matrices. Given tuples $X=(X_1, \dots, X_g) \in \smnonerg$ and $Y=(Y_1, \dots, Y_g) \in \smntworg$, a matrix convex combination of $X$ and $Y$ is a sum of the form
\[
V_1^* XV_1+V_2^* Y V_2 \quad \quad \quad V_1^* V_1+V_2^* V_2=I_n
\]
where $V_1:\R^n \to \R^{n_1}$ and $V_2:\R^n \to \R^{n_2}$ are contractions. Matrix convex sets are sets which are closed under matrix convex combinations. A key feature of matrix convex combinations is that the $g$-tuples $X, Y$, and $V_1^* XV_1+V_2^* Y V_2$ do not need to have the same size. As a result, matrix convex sets are a dimension free analog of convex sets.

While in the classical setting there is only one notion of an extreme point, there are  three main notions of extreme points for matrix convex sets: ordinary, matrix, and absolute extreme points. Absolute extreme points are closely related to the classical Arveson boundary.
A central goal in the theory of matrix convex sets is to determine if one of these types of extreme points for a matrix convex set minimally recovers the set through matrix convex combinations. 

This article shows that every real compact matrix convex set which is defined by a linear matrix inequality is the matrix convex hull of its absolute extreme points, and that the absolute extreme points are the minimal set with this property. Furthermore, we give an algorithm which expresses a tuple as a matrix convex combination of absolute extreme points with optimal bounds. Similar results hold when working over the field of complex numbers rather than the reals.
\end{abstract}

\maketitle

\section{Introduction}

This paper concerns extreme points of
 noncommutative (free) convex sets.
In the free setting there are three major notions of an extreme point.
We shall study the most restricted class of extreme points,
the  absolute extreme points, a notion  introduced
 by Kleski \cite{KLS14}. This class of extreme points is closely related to Arveson's notion \cite{A69} of an irreducible boundary representation of an operator system \cite{KLS14,EHKM17}.
Hence the subject at hand goes back about 50 years.

Noncommutative convex sets can be described as solution
sets to types of linear matrix inequalities (LMIs), the workhorse of semidefinite programming.
Next we introduce this special type of LMI.
 Let $A =(A_1, A_2, \dots,A_g)$ be a $g$-tuple of bounded self-adjoint
operators on a real or complex Hilbert space $\cH$ and let $\tilde{\cH}$ be a real or complex Hilbert space with a nested sequences of subspaces $\{\tilde{\cH}^\ell\}_{\ell=1}^\infty$. We define an affine linear function $L_A$ on
tuples  $X =(X_1, X_2, \dots,X_g)$ of bounded self-adjoint
operators acting on $\tilde{\cH}^\ell$ for some $\ell$ by
\[
L_A (X)=I_{\cH}\otimes I_{\tilde{\cH}}+\Lambda_A (X)=I_{\cH}\otimes I_{\tilde{\cH}}+A_1 \otimes X_1 + \cdots + A_g \otimes X_g,
\]
and we define $\cD_A(\tilde{\cH}^\ell) $
to be the set of solutions to the LMI
\beq
\label{eq:GenMatConvLMI}
\cD_A(\tilde \cH) :=\{X \in \bbS(\tilde{\cH}^\ell)^g |\ L_A (X) \ pos \ semidef\ \}.
\eeq
Here $\bbS(\tilde{\cH}^\ell)^g$ denotes $g$-tuples of self-adjoint operators on $\tilde{\cH}^\ell$. The set
\[
\cup_{\ell=1}^\infty \cD_A (\tilde{\cH}^\ell)
\]
which we arrive at is a type of dimension free set which is operator convex and contains tuples of operators acting on each $\tilde{\cH}^\ell$.

A central question is whether
operator convex combinations of the absolute
extreme points of $\cup_{\ell=1}^\infty \cD_A (\tilde{\cH}^\ell)$ span $\cup_{\ell=1}^\infty \cD_A (\tilde{\cH}^\ell)$. We will define absolute extreme points (in a limited context) in Section \ref{sec:MatConvSets}. We remark that every closed matrix convex set can be expressed in the form of equation \eqref{eq:GenMatConvLMI} \cite{EW97}. Furthermore, matrix convex sets defined by noncommutative polynomial inequalities in matrix variables (``noncommutative semialgebraic sets") can be defined in this form where $\cH$ is finite dimensional \cite{HM12}.

Arveson conjectured that the irreducible boundary representations (in our language the absolute extreme points) span when $\cH$ and $\tilde \cH$ are Hilbert
spaces, see \cite{A69} and \cite{A72}. More on this viewpoint to extreme points is found in Section \ref{sec:AltContext}.
Many years later Dritschel and McCullough \cite{DM05}
showed if $\cH$ is separable and $\tilde \cH$ has cardinality
of the second uncountable ordinal, then uncountable combinations of absolute extreme points span. In that paper they say their dilation ideas were seriously influenced by a construction used in Agler's approach to model theory, see \cite{A88}. A decade later Davidson and Kennedy \cite{DK15} gave a complete and positive answer to Arveson's original question. As a consequence, \cite{DK15} shows that when $\cH$ and $\tilde{\cH}$ are both separable the absolute extreme points span. The finite dimensional version of the problem has been
  pursued for some time but until now has remained unsettled.

  In this paper we prove the finite dimensional version of Arveson's conjecture in the real and complex setting, see Theorem \ref{thm:AbsSpanBound}: \\
 {\it If $\cH = \R^d$ and
  $X$ is a $g$-tuple of self-adjoint $n \times n$ matrices over $\bbK=\R \mathrm{\ or \ } \C $ with $X$ in $\cD_A:= \cup_n \cD_A (\bbK^n)$,
  then   $X $ is a finite matrix convex combination of
 absolute extreme points of $\cD_A$ whose sum of sizes is bounded by $n(g+1)$ when $\bbK=\R$ and by $2n(g+1)$ when $\bbK=\C$.
  }
 \\ The proof is constructive and yields an algorithm
 for construction, see Section \ref{sec:Computation}.

 In the remainder of this section we introduce our basic definitions and notation and give a precise statement of our main results, Theorem \ref{thm:AbsSpanMin} and Theorem \ref{thm:AbsSpanBound}.  Some definitions just given will be repeated to provide a complete list.

\subsection{Notation and definitions}
Let $\bbK$ denote either $\R$ or $\C$. We will say a matrix is \df{self-adjoint} over $\bbK$ to mean the matrix is self-adjoint if $\bbK=\C$ or symmetric if $\bbK=\R$. For any positive integers $g$ and $n$, let $\smnkg$ denote the set of $g$-tuples $X=(X_1, \dots, X_g)$ of $n \times n$ self-adjoint matrices over $\bbK$ and let $\smkg$ denote the set $\smkg= \cup_n \smnkg$. Similarly, for positive integers $n, \ell$ and $g$ let $M_{n \times \ell} (\R)^g$ denote the set of $g$-tuples $\beta=(\beta_1, \dots, \beta_g)$ of $n \times \ell$ matrices over $\bbK$. Say a matrix $U \in M_n (\K)$ is a \df{unitary} if $U^* U=I_n$. Similarly, a matrix $V \in M_{n \times m} (\K)$ is an \df{isometry} if $V^* V=I_m$.

Given a matrix $M \in \K^{n \times n}$, a subspace $N \subset \K^n$ is a \df{reducing subspace} if both $N$ and $N^\perp$ are invariant subspaces of $M$. That is, $N$ is a reducing subspace for $M$ if $MN \subseteq N$ and $MN^\perp \subseteq N^\perp$. A tuple $X \in \smnkg$ is \df{irreducible} over $\K$ if the matrices $X_1, \dots, X_g$ have no common reducing subspaces in $\K^n$; a tuple is \df{reducible} over $\K$ if it is not irreducible over $\K$. 

Given a $g$-tuple $X \in \smnkg$  and a matrix $W \in M_{n \times m} (\bbK)$ we define the \df{conjugation of $X$ by $W$} by 
\[
W^* X W= (W^* X_1 W, \dots, W^* X_g W).
\]
If $W$ is a unitary (resp. isometry) then we say $W^* X W$ is a \df{unitary (resp. isometric) conjugation}. Given tuples $X,Y \in \smnkg$ say $X$ and $Y$ are \df{unitarily equivalent}, denoted by $X \sim_u Y$, if there exists a unitary matrix $U \in M_n (\bbK)$ such that
\[
U^* X U=Y.
\]
A subset $\Gamma \subset \smkg$ is \df{closed under unitary conjugation} if $X \in \Gamma$ and $Y \sim_u X$ implies $Y \in \Gamma$. We define the set \df{$\Gamma$ at level $n$}, denoted $\Gamma (n)$, by 
\[
\Gamma(n)= \Gamma \cap \smnkg.
\]
\index{$\Gamma (n)$}
That is, $\Gamma (n)$ is the set of $g$-tuples of $n \times n$ self-adjoint matrices in $\Gamma$.

\subsubsection{Matrix convex sets and extreme points} \label{sec:MatConvSets} Let $K \subset \smkg$.  A \df{matrix convex combination} of elements of $K$ is a finite sum of the form
\[
\sum_{i=1}^k V_i^* Y^i V_i \quad \quad \quad \quad \sum_{i=1}^k V_i^* V_i =I_n
\]
where $Y^i \in K(n_i)$ for $i=1, \dots, k$ and $V_i$ is an $n_i \times n$ matrix with entries in $\K$ for each $i$. If additionally $V_i \neq 0$ for each $i$, then the matrix convex combination is said to be \df{weakly proper}. If $K$ is closed under matrix convex combinations  then $K$ is \df{matrix convex}. 

Matrix convex combinations can equivalently be expressed via isometric conjugation. As before, let $\{Y^i\}_{i=1}^k \subset K$ be a finite collection of elements of $K$ and let $\{V_i\}_{i=1}^k$ be a collection of mappings from $\bbK^{n}$ to $\bbK^{n_i}$ such that  $\sum_{i=1}^k V_i^* V_i =I_n$. Define the $g$-tuple $Y$ and the isometry $V$ by 
\[
Y= \oplus_{i=1}^k Y^i \quad \quad \quad V^*=\begin{pmatrix} V_1^* & \cdots & V_k^* \end{pmatrix}.
\]
Then 
\beq
\label{eq:MatConvAreIsoConj}
V^* Y V= \sum_{i=1}^k V_i^* Y^i V_i \quad \quad \quad V^* V=\sum_{i=1}^k V_i^* V_i=I_n.
\eeq
In words, $V^* Y V$ is an isometric conjugation which is equal to the matrix convex combination $\sum_{i=1}^k V_i^* Y^i V_i$. A matrix convex combination of the form $V^* Y V$ is called a \df{compression} of $Y$. Given a set $K \subset \smkg$, define the \df{matrix convex hull} of $K$, denoted  
\[
\comat K,
\]
\index{$\comat K$}to be the smallest matrix convex set in $\smkg$ that contains $K$. Equivalently, $\comat K$ is the set of all matrix convex combinations of  elements of $K$.

Given a matrix convex set $K$, say $X \in K(n)$ is an \df{absolute extreme point} of $K$ if whenever $X$ is written as a weakly proper matrix convex combination $X=\sum_{i=1}^k V_i^* Y^i V_i$, then for all $i$ either $n_i=n$ and $X \sim_u Y^i$ or $n_i > n$ and there exists a tuple $Z^i \in K$ such that $X \oplus Z^i \sim_u Y^i$. We let $\abs K$ \index{$\abs K$}denote the set of absolute extreme points of $K$ and we call $\abs K$ the \df{absolute boundary} of $K$. We remark that an absolute extreme point $X=(X_1, \dots, X_g)$ has the property that $X_1, \dots, X_g$ is an irreducible collection of operators.

A matrix convex set $K$ is \df{bounded} if there is a real number $C>0$ such that 
\[
C-\sum_{i=1}^g X_i^2 \succeq 0
\]
for every tuple $X \in K$. We say $K$ is \df{closed} if $K(n)$ is closed for all $n \in N$ and we say $K$ is \df{compact} if $K$ is closed and bounded. We emphasize that $\comat K$ is not assumed to be closed.

\subsubsection{Free spectrahedra}

Free spectrahedra are a class of matrix convex sets; they are the solution set of a linear matrix inequality. 

Given a $g$-tuple $A$ of $d \times d$ self-adjoint matrices with entries in $\K$, let $\Lambda_A$ denote the \df{homogeneous linear pencil}
\[
\Lambda_A (x)=A_1 x_1+ \cdots + A_g x_g
\]
and let $L_A$ denote the \df{monic linear pencil}
\beq
\label{eq:MonicLinPencilDef}
L_A (x)=I_d + A_1 x_1+ \cdots +A_g x_g .
\eeq
Given a positive integer $n \in \bbN$ and an $X \in \smnkg$, the \df{evaluation} of the monic linear pencil $L_A$ on $X$ is defined by
\[
L_A (X)=I_{dn}+\Lambda_A (X)=I_{dn}+A_1 \otimes X_1 + \cdots + A_g \otimes X_g
\]
\index{$L_A(X)$}
where $\otimes$ denotes the Kronecker product.

The \df{free spectrahedron at level $n$}, denoted $\cD_A (\K^n)$, will typically be abbreviated 
\[
\cD^\K_A (n)=\{X \in \smnkg |\ L_A (X) \succeq 0 \}. \index{$\cD^\K_A (n)$}
\]
The corresponding \df{free spectrahedron} is the set $\cup_n \cD^\K_A(n) \subset \smkg$. In other words,
\[
\cD^\K_A=\{X \in \smkg |\ L_A (X) \succeq 0 \} \index{$\cD^\K_A$}.
\]
For emphasis, the elements of the \df{real free spectrahedron} $\cD_A^\R$ \index{$\cD^\R_A$} are $g$-tuples of real symmetric matrices, while the elements of the \df{complex free spectrahedron} $\cD_A^\C$ \index{$\cD^\C_A$} are $g$-tuples of complex self-adjoint matrices.

 We say a free spectrahedron $\cD_A^\K$ is \df{closed under complex conjugation} if $X \in \cD_A^\K$ implies 
\[
\overline{X}=(\overline{X}_1, \dots, \overline{X}_g) \in \cD_A^\K.
\]
Note that when $\K=\R$ the real free spectrahedron $\cDAR$ is trivially closed under complex conjugation. 
See \cite{HKM13}, \cite{Z17} and \cite{K+} for further discussion of linear pencils and free spectrahedra.

\subsection{Absolute extreme points span}

The following theorem, our first main result, shows that every compact free spectrahedron which is closed under complex conjugation is the matrix convex hull of its absolute extreme points. Furthermore, it shows that the absolute boundary is the smallest set of irreducible tuples which is closed under unitary conjugation and spans the free spectrahedron.
\begin{thm}
\label{thm:AbsSpanMin}
Assume $\K=\R$ or $\C$ and let $\cDAK$ be a compact free spectrahedron which is closed under complex conjugation. Then $\cDAK$ is the matrix convex hull of its absolute extreme points. In notation,
\[
\cDAK= \comat \abs \cDAK.
\]

Furthermore, if $K \subset \smkg$ is any closed matrix convex set and if $E \subset K$ is a set of irreducible tuples which is closed under unitary conjugation and whose matrix convex hull is equal to $K$, then $E$ contains the absolute boundary of $K$. In other words,
\[
K=\comat E \quad  \Rightarrow \quad  \abs K \subset E.
\]
In this sense the absolute extreme points are the minimal spanning set of a free spectrahedron.
\end{thm}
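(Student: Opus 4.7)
The two halves of Theorem~\ref{thm:AbsSpanMin} have very different flavors. The span statement is essentially the finite-dimensional Arveson span conjecture for compact free spectrahedra, and my plan is to derive it as an immediate corollary of the quantitative bound Theorem~\ref{thm:AbsSpanBound}. The minimality statement, by contrast, admits a short self-contained structural argument using only the definition of an absolute extreme point together with irreducibility and closure of $E$ under unitary conjugation.

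\textbf{Span part.} The inclusion $\comat \abs \cDAK \subset \cDAK$ is automatic, since $\abs \cDAK \subset \cDAK$ and $\cDAK$ is matrix convex. For the reverse inclusion, Theorem~\ref{thm:AbsSpanBound} applied to any $X \in \cDAK(n)$ produces an explicit representation $X = \sum_i V_i^* Y^i V_i$ with $Y^i \in \abs \cDAK$ and $\sum_i V_i^* V_i = I_n$, so $\cDAK \subset \comat \abs \cDAK$. The hypothesis of closure under complex conjugation is needed precisely to invoke Theorem~\ref{thm:AbsSpanBound}: when $\K=\R$ it holds trivially, and when $\K=\C$ it is what allows the underlying dilation construction to stay inside $\cDAK$.

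\textbf{Minimality part.} Let $K \subset \smkg$ be a closed matrix convex set and let $E \subset K$ be a set of irreducible tuples, closed under unitary conjugation, with $K = \comat E$. Fix $X \in \abs K(n)$. Since $X \in \comat E$, there exist $Y^i \in E(n_i)$ and matrices $V_i$ of size $n_i \times n$ over $\K$ with
\[
X = \sum_{i=1}^k V_i^* Y^i V_i, \qquad \sum_{i=1}^k V_i^* V_i = I_n.
\]
Discarding indices with $V_i = 0$ turns this into a weakly proper matrix convex combination, so the defining property of an absolute extreme point applies: for every remaining $i$, either $n_i = n$ and $Y^i \sim_u X$, or $n_i > n$ and $Y^i \sim_u X \oplus Z^i$ for some $Z^i \in K(n_i - n)$. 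The second alternative is impossible: it would exhibit $Y^i$ as unitarily equivalent to a block-diagonal tuple with at least two nontrivial blocks, and the orthogonal splitting of $\K^{n_i}$ into the $X$-part and the $Z^i$-part would be a common reducing subspace for $Y^i$, contradicting $Y^i \in E$. Thus $n_i = n$ and $Y^i \sim_u X$ for every $i$; since $E$ is closed under unitary conjugation, $X \in E$. This proves $\abs K \subset E$.

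\textbf{Where the real difficulty lies.} The substantive work for Theorem~\ref{thm:AbsSpanMin} is entirely hidden inside Theorem~\ref{thm:AbsSpanBound}, which I expect to be proved by a dilation algorithm: start with $X$, and at each stage at which $X$ fails to be absolute extreme produce a proper dilation of the form $\bigl(\begin{smallmatrix} X & \beta \\ \beta^* & \gamma \end{smallmatrix}\bigr) \in \cDAK$ that strictly decreases some kernel-based invariant of $L_A$. Controlling the reality of this dilation, which is precisely where closure under complex conjugation enters, and bounding the total number of iterations by $n(g+1)$ (respectively $2n(g+1)$) is the main technical content. Given that bound, both halves of Theorem~\ref{thm:AbsSpanMin} are short; the minimality half in particular has no hidden technical obstacle beyond the care needed to reduce to a weakly proper combination before invoking the absolute extreme condition.
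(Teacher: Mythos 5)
Your proposal is correct and follows essentially the same route as the paper: the span statement is deduced directly from Theorem~\ref{thm:AbsSpanBound}, and the minimality statement is proved by writing an absolute extreme point $X$ as a (weakly proper) matrix convex combination of elements of $E$ and using irreducibility of the $Y^i$ to rule out the dilation alternative, forcing $n_i=n$ and $Y^i\sim_u X$, whence $X\in E$ by closure under unitary conjugation. Your explicit reduction to a weakly proper combination and the reducibility argument for the case $n_i>n$ simply spell out details the paper leaves implicit.
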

\begin{proof}
The fact that $\cDAK$ is the matrix convex hull of its absolute extreme points follows immediately from the forthcoming Theorem \ref{thm:AbsSpanBound}.

We now prove the second part of the result. $K \subset \smkg$ be any closed matrix convex set and let $E \subset \cDAK$ be a set of irreducible tuples which is closed under unitary conjugation and satisfies $\comat E=K.$ If $\abs K=\emptyset$ we are done. Otherwise, there is a positive integer $n$ and a tuple $X \in \smnkg$ such that $X \in \abs K (n)$. By assumption $X \in \comat E$, so there must exist a finite collection of tuples $\{Y^i\} \subset E$ and contractions $V_i: \K^n \to \K^{n_i}$ such that
\[
X=\sum_{i=1}^\mathrm{finite} V_i^* Y^i V_i.
\] 

Since $X$ is an absolute extreme point of $K$ and each $Y^i$ is irreducible we conclude that for each $i$ we have $n_i=n$ and there is a unitary $U_i:\K^n \to \K^n$ such that $U_i^* Y^i U_i=X$. By assumption $E$ is closed under unitary conjugation, so it follows that $X \in E$.
\end{proof}

\subsection{Dilations to Arveson extreme points}

Our second main result is a more quantitative version of Theorem \ref{thm:AbsSpanMin}.

\subsubsection{Dilations}

Let $K \subset \smkg$ be a matrix convex set and let $X \in K (n)$. If there exists a positive integer $\ell \in \bbN$ and $g$-tuples $\beta \in M_{n \times \ell} (\K)^g$ and $\gamma \in \smlkg$ such that
\[
Y=\begin{pmatrix}
X & \beta \\
\beta^* & \gamma
\end{pmatrix}= \left(\begin{pmatrix}
X_1 & \beta_1 \\
\beta^*_1 & \gamma_1
\end{pmatrix}, \cdots,
\begin{pmatrix}
X_g & \beta_g \\
\beta^*_g & \gamma_g 
\end{pmatrix} \right) \in K,
\]
then we say \df{$Y$ is an $\ell$-dilation of $X$}. The tuple $Y$ is said to be a \df{trivial dilation} of $X$ if $\beta=0$. Note that, if $V^*=\begin{pmatrix} I_n & 0 \end{pmatrix},$ then $X=V^* Y V$ with $V^* V=I_n$. That is, $X$ is a matrix convex combination of $Y$ in the spirit of equation \eqref{eq:MatConvAreIsoConj}.

Given tuples $A \in \smdkg$ and $X \in \smnkg$, we define the \df{dilation subspace of $\cDAK$ at $X$}, denoted $\fK^\K_{A,X}$, to be
\[
\fK^\K_{A,X}=\{\beta \in M_{n \times 1} (\K)^g | \ \ker L_A (X) \subset \ker \Lambda_A (\beta^*)\}. \index{$\fK^\K_{A,X}$}
\]
In this definition $\ker L_A(X)$ and $\ker \Lambda_A(X)$ are subspaces of $\K^{dn}$. The dilation subspace is examined in greater detail in Section \ref{sec:DiSubspace}.

\subsubsection{Arveson extreme points span}
The Arveson boundary of a matrix convex set $K$ is a classical dilation theoretic object which is closely related to the absolute boundary of $K$. We say a tuple $X \in K$ is an \df{Arveson extreme point} of $K$ if $K$ does not contain a nontrivial dilation of $X$. In other words, $X \in K$ is an Arveson extreme point of $K$ if and only if, if
\beq
\label{eq:ArvDefEq}
\begin{pmatrix}
X & \beta \\
\beta^* & \gamma 
\end{pmatrix} \in K(n+\ell)
\eeq
for some tuples $\beta \in M_{n \times \ell} (\K)^g$ and $\gamma \in \smlkg$, then $\beta=0$. A coordinate free definition is as follows. The point $X \in K(n)$ is an Arveson boundary point of $K$ if for each $m$ each  $Y$ in $K(m)$ and isometry $V:\K^n\to \K^m$ such that $X= V^*Y V$
 it follows that $VX=YV$. The set of Arveson extreme points of $K$, denoted by $\arv K, \index{$\arv K$}$ is called the \df{Arveson boundary} of $K$. If $Y$ is an Arveson extreme point of $K$ and $Y$ is an ($\ell$-)dilation of $X \in K$, then we will say $Y$ is an \df{Arveson ($\ell$-)dilation} of $X$.

The Arveson and absolute extreme points of a matrix convex set are closely related. Indeed the following theorem shows that a tuple is an absolute extreme point if and only if it is an irreducible Arveson extreme point.

\begin{theorem}
\label{theorem:EHKMRealComplex} Let $\cDAK$ be a free spectrahedron which is closed under complex conjugation. Then $X \in \cDAK$ is an absolute extreme point of $\cDAK$ if and only if $X$ is irreducible over $\K$ and $X$ is Arveson extreme point of $\cDAK$.
\end{theorem}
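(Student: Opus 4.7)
My plan is to prove the two directions of the biconditional separately. The $(\Leftarrow)$ direction admits a clean Schur's-lemma argument using the Arveson property, while the $(\Rightarrow)$ direction has a routine irreducibility step plus a more subtle Arveson implication that forms the main obstacle.

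First I would handle the reverse direction. Suppose $X$ is irreducible over $\K$ and Arveson extreme, and take a weakly proper matrix convex combination $X = \sum_{i=1}^k V_i^* Y^i V_i$ with $\sum V_i^* V_i = I_n$. Recasting this as an isometric conjugation $X = V^* Y V$ with $Y = \oplus_i Y^i \in \cDAK$ and $V$ the stacked isometry (as in equation~\eqref{eq:MatConvAreIsoConj}), the coordinate-free Arveson characterization forces $YV = VX$, which in blocks gives $Y^i V_i = V_i X$ for each $i$. Multiplying by $V_i^*$ on the left and using self-adjointness shows $V_i^* V_i$ commutes with every $X_j$, so by Schur's lemma applied to the division-algebra commutant of $X$ (whose self-adjoint elements act as real scalar multiples of $I_n$) we obtain $V_i^* V_i = \lambda_i I_n$ with $\lambda_i > 0$ by weak properness. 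The renormalizations $U_i := V_i/\sqrt{\lambda_i}$ are then isometries satisfying $Y^i U_i = U_i X$, so $\mathrm{ran}\, U_i$ is a reducing subspace of $Y^i$; this produces $Y^i \sim_u X$ when $n_i = n$, or $Y^i \sim_u X \oplus Z^i$ with $Z^i \in \cDAK$ the restriction of $Y^i$ to $(\mathrm{ran}\, U_i)^\perp$ when $n_i > n$, establishing the absolute extreme condition.

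For the forward direction, irreducibility follows by contradiction: if $X \sim_u X^{(1)} \oplus X^{(2)}$ nontrivially, the standard block decomposition gives a weakly proper matrix convex combination with summand sizes strictly less than $n$, violating the absolute extreme condition. For the Arveson property, take any dilation $Y = \begin{pmatrix} X & \beta \\ \beta^* & \gamma \end{pmatrix} \in \cDAK$; the combination $X = V^* Y V$ with $V = \begin{pmatrix} I_n \\ 0 \end{pmatrix}$ is weakly proper, so absolute extremeness produces $Y \sim_u X \oplus Z$ for some $Z \in \cDAK$. Let $\cR_X$ be the $X$-isotypic subspace of $Y$, so that $Y$ restricts on $\cR_X$ to a tuple unitarily equivalent to $X^{(m)} \cong X \otimes I_m$ for some $m \geq 1$ and on $\cR_X^\perp$ to a tuple with no $X$-summand. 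Decomposing $V = V_{\mathrm{iso}} + V_{\mathrm{rest}}$ relative to $\cR_X \oplus \cR_X^\perp$ and reapplying the absolute extreme condition to the resulting two-term matrix convex combination forces $V_{\mathrm{rest}} = 0$, since otherwise the $\cR_X^\perp$-part would be forced (by both possible size cases in the definition) to unitarily contain $X$, contradicting its construction. Within $\cR_X \cong \K^n \otimes \K^m$ I would then write $V = \sum_\alpha V^{(\alpha)} \otimes e_\alpha$ and invoke Kraus uniqueness for the identity CP map---which over $\C$ follows from $X$'s irreducibility via the double commutant theorem, and over $\R$ uses the closed-under-complex-conjugation hypothesis to reduce to the complex-irreducible case---to conclude each $V^{(\alpha)}$ is a scalar multiple of $I_n$, so $\mathrm{ran}\, V$ is reducing in $Y$ and $\beta = 0$.

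The main obstacle I anticipate is the final step of the forward direction: showing that $\mathrm{ran}\, V$ is reducing within the $X$-isotypic subspace. The delicate part is extracting the tensor-product structure of $V$ from the compression identity $V^*(X \otimes I_m) V = X$ together with $V^* V = I_n$, and in the real case with commutant strictly larger than $\R$---where the algebra generated by the $X_j$ is a proper subalgebra of $M_n(\R)$---invoking the complex-conjugation hypothesis to pass to a setting where Kraus uniqueness for the identity map applies cleanly.
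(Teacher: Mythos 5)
Your reverse direction ($\Leftarrow$) is essentially the paper's argument: recast the combination as an isometric conjugation, use the coordinate-free Arveson characterization to get $YV=VX$, deduce that each $V_i^*V_i$ commutes with the $X_j$, invoke Schur's lemma on the commutant to get $V_i^*V_i=\lambda_i I$, renormalize, and conclude. This is exactly the content of Lemma~\ref{lem:irreduciblegives} (stated with $C^TC$ a multiple of the identity) and the proof that follows it.

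For the forward direction the paper does not reprove the claim --- it cites \cite[Lemmas 3.11 and 3.13]{EHKM17} and observes the arguments carry over to $\R$ --- so you are attempting an independent proof there, and I think it has a genuine gap in the final step. After reducing to the $X$-isotypic component, you have an isometry $W:\K^n\to\K^n\otimes\K^m$ with $W^*(X_j\otimes I_m)W=X_j$ for each $j$, and you write $W=\sum_\alpha W^{(\alpha)}\otimes e_\alpha$ and appeal to ``Kraus uniqueness for the identity CP map'' plus the double commutant theorem. But the unital CP map $\Phi(A)=W^*(A\otimes I_m)W$ is only known to fix $I,X_1,\dots,X_g$; since CP maps are not multiplicative, this does \emph{not} imply $\Phi$ is the identity on the $C^*$-algebra generated by $X$ (which is $M_n(\C)$ in the irreducible complex case), and Kraus uniqueness for the identity map therefore does not apply as stated. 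The double commutant theorem gives that $X$ generates $M_n$, but tells you nothing about $\Phi$ off the span of $\{I,X_1,\dots,X_g\}$. In concrete examples one can often force $\Phi=\mathrm{id}$ by spectral rigidity arguments (Kadison--Schwarz equality when $X_j$ is a projection or involution, or a kernel argument at the extreme eigenvalues combined with Cauchy--Schwarz), but those arguments use the particular structure of the $X_j$, not irreducibility alone, and you do not supply them. A cleaner route that avoids the issue entirely: Arveson extremeness is equivalent to admitting no nontrivial $1$-dilation, and for a $1$-dilation $Y\in K(n+1)$ of an irreducible $X$ with $n>1$, absolute extremeness gives $Y\sim_u X\oplus z$ with $z\in K(1)$, so the $X$-isotypic multiplicity is automatically $m=1$; then $V_{\mathrm{iso}}$ is an isometry from $\K^n$ onto the $n$-dimensional $X$-summand, hence unitary onto it, the range is reducing, and $\beta=0$ without any Kraus argument. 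If you intend to handle general $\ell$-dilations directly, the multiplicity-$m$ step needs a different and more careful justification than the one you sketch.

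One smaller point: your application of absolute extremeness to kill $V_{\mathrm{rest}}$ should also note the case where the size of the $\cR_X^\perp$-block is strictly less than $n$, in which case \emph{neither} clause in the definition of absolute extreme point can hold and the contradiction is immediate; as written you only discuss the cases $n_i=n$ and $n_i>n$.
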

\begin{proof}
The original statement and proof of this result is given as \cite[Theorem 1.1 (3)]{EHKM17} over the field of complex numbers. A proof for the case where $\K=\R$ is given in Section \ref{sec:EHKMReal}. We comment that the original statement handles more general complex dimension free sets; however, this version is well suited to our needs.
\end{proof}

Our next theorem shows that every element of a compact free spectrahedron $\cDAK$ which is closed under complex conjugation dilates to the Arveson boundary of $\cDAK$.

\begin{thm}
\label{thm:AbsSpanBound}
Let $A$ be a $g$-tuple of self-adjoint matrices with entries in $\K$ and let $\cDAK$ be a compact free spectrahedron which is closed under complex conjugation. Let $X \in \cDAK (n)$ with
\[
\dim \fK^\K_{A,X}=\ell.
\]

\begin{enumerate}

\item 
\label{it:ComplexSpecBound}
There exists an integer $k \leq 2\ell+n \leq 2ng+n$ and $k$-dilation $Y$ of $X$ such that $Y$ is an Arveson extreme point of $\cDAC$. Thus, $X$ is a matrix convex combination of absolute extreme points of $\cDAC$ whose sum of sizes is equal to $n+k$.

\item
\label{it:RealSpecBound}
Suppose $X$ is a tuple of real symmetric matrices, then there exists an integer $k \leq \ell \leq ng$ and $k$-dilation $Y$ of $X$ such that $Y$ is an Arveson extreme point of $\cDAK$. Thus, $X$ is a matrix convex combination of absolute extreme points of $\cDAK$ whose sum of sizes is equal to $n+k$.
\end{enumerate}
As an immediate consequence, $\cD_A^\K$ is the matrix convex hull of its absolute extreme points.
\end{thm}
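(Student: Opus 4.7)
The plan is an iterative dilation procedure: starting from $X\in\cDAK(n)$, I repeatedly produce nontrivial $1$-dilations that strictly shrink the dilation subspace $\fK^\K$, stop once it vanishes (Arveson extreme), and then peel off irreducible blocks (absolute extreme by Theorem \ref{theorem:EHKMRealComplex}). The quantitative bounds on $k$ come from a bookkeeping lemma on how $\dim\fK^\K$ decreases at each step, plus a doubling trick for the complex case.

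First I would establish the pivotal equivalence
\[
X\in\cDAK(n)\text{ is Arveson extreme}\iff \fK^\K_{A,X}=\{0\}.
\]
For the forward direction, any $1$-dilation $Y=\begin{pmatrix} X & \beta\\ \beta^* & \gamma\end{pmatrix}\in\cDAK(n+1)$ satisfies
\[
L_A(Y)=\begin{pmatrix} L_A(X) & \Lambda_A(\beta)\\ \Lambda_A(\beta)^* & L_A(\gamma)\end{pmatrix}\succeq 0,
\]
and the Douglas lemma / Schur complement then forces $\ker L_A(X)\subset\ker\Lambda_A(\beta^*)$, i.e.\ $\beta\in\fK^\K_{A,X}$. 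For the converse, given $0\neq\beta\in\fK^\K_{A,X}$, compactness of $\cDAK$ lets me take $\gamma$ to be a sufficiently scaled feasible scalar tuple so that the Schur complement condition $L_A(\gamma)-\Lambda_A(\beta)^*L_A(X)^{\dagger}\Lambda_A(\beta)\succeq 0$ holds, producing a nontrivial $1$-dilation.

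For item \ref{it:RealSpecBound} I would induct on $\ell=\dim\fK^\R_{A,X}$. If $\ell=0$, then $X$ is already Arveson extreme and $k=0$ works. If $\ell>0$, pick $0\neq\beta\in\fK^\R_{A,X}$, choose $\gamma$ making the Schur complement maximally degenerate (equivalently, making $\ker L_A(Y)$ as large as possible), and form the $1$-dilation $Y\in\cDAK(n+1)$. The heart of the argument is the inequality
\[
\dim\fK^\R_{A,Y}\le \dim\fK^\R_{A,X}-1,
\]
whose point is that the new kernel vector introduced by the optimal choice of $\gamma$ imposes one new linear constraint on the new dilation subspace that cannot be satisfied in the direction "along $\beta$". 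Applying the inductive hypothesis to $Y$ produces an Arveson dilation of $Y$ of size at most $\ell-1$, so the total dilation of $X$ has size $k\le 1+(\ell-1)=\ell\le ng$. For item \ref{it:ComplexSpecBound}, I would reduce to the real case via the closure of $\cDAK$ under complex conjugation: $X\oplus\bar X\in\cDAK(2n)$ and is unitarily equivalent, via $\tfrac{1}{\sqrt 2}\begin{pmatrix} I & iI\\ iI & I\end{pmatrix}$, to a real symmetric tuple $\tilde X$ of size $2n$ lying in a real free spectrahedron extracted from the real/imaginary parts of $A$. The real dilation subspace of $\tilde X$ has real dimension at most $2\ell$, so item \ref{it:RealSpecBound} yields an Arveson real dilation of size at most $2\ell$; undoing the unitary and restricting from a dilation of $X\oplus \bar X$ back to a dilation of $X$ costs up to an extra $n$ rows and columns, giving $k\le 2\ell+n$.

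Finally, to pass from Arveson to absolute extreme points, the Arveson extreme dilation $Y$ may still be reducible, so I decompose $Y\sim_u\oplus_i Z^i$ into irreducible blocks over $\K$. Each $Z^i$ inherits the Arveson property because $L_A$ respects the block decomposition and a nontrivial dilation of some $Z^i$ would produce one of $Y$; by Theorem \ref{theorem:EHKMRealComplex}, each $Z^i$ is an absolute extreme point of $\cDAK$. Writing $X=V^*YV$ with $V^*=\begin{pmatrix} I_n & 0\end{pmatrix}$ and then expanding $V^*YV=\sum_i V_i^*Z^iV_i$ according to the block structure exhibits $X$ as a matrix convex combination of absolute extreme points whose sum of sizes equals $n+k$. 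The main obstacle is the iteration lemma in the real case — one must choose $\gamma$ at each step so carefully that the drop in $\dim\fK^\R$ is guaranteed to be at least one, and verify that the complex-to-real unitary reduction transports the Arveson dilation cleanly back to a dilation of the original complex tuple with the stated bound.
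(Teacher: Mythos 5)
Your overall architecture matches the paper's (reduce to the real case, iterate $1$-dilations until the dilation subspace vanishes, invoke Lemma \ref{lem:DiSubProperties}~\eqref{it:XArvIFFDiSubIsZero} and Theorem \ref{theorem:EHKMRealComplex}, and handle $\K=\C$ via $X\oplus\overline{X}$ and the unitary $\frac{1}{\sqrt 2}\begin{pmatrix} I & iI\\ iI & I\end{pmatrix}$), but the heart of the proof is missing. The inequality $\dim\fK^\R_{A,Y}\le\dim\fK^\R_{A,X}-1$ is exactly the content of Theorem \ref{theorem:1DiaReduceKerConDim}, and it is false for an arbitrary $1$-dilation: if $c$ is small and $\gamma=0$, the kernel of $L_A(Y)$ can shrink or even vanish, so $\dim\fK^\R_{A,Y}$ can \emph{increase} (up to $(n+1)g$). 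So everything hinges on how $\gamma$ (and the scaling of $\beta$) is chosen, and your criterion --- ``choose $\gamma$ making the Schur complement maximally degenerate, equivalently making $\ker L_A(Y)$ as large as possible'' --- is neither clearly well-posed (maximizing an integer-valued, non-convex quantity) nor accompanied by an argument that it forces the drop; the one-sentence gloss about ``one new linear constraint that cannot be satisfied in the direction along $\beta$'' is an assertion, not a proof. The paper instead uses a two-stage \emph{maximal $1$-dilation}: first maximize the scalar $c$ with $\begin{pmatrix} X & c\hbeta\\ c\hbeta^* & \gamma\end{pmatrix}\in\cDAR$, then take $\hgamma$ a local maximizer of $\|\gamma\|$; the dimension drop is then proved by two separate contradiction arguments via the NC LDL$^*$ decomposition of block $3\times 3$ pencils --- maximality of $c$ rules out $\hbeta$ lying in the projected subspace $\fE_{A,\hY}$, and local maximality of $\|\hgamma\|$ rules out extra directions of the form $(0,\sigma)$ in $\fK^\R_{A,\hY}$. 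None of this finite-dimensional kernel/perturbation analysis (which is also where the argument genuinely uses $\dim\cH<\infty$; cf.\ Section \ref{sec:GeneralMatConv}) appears in your proposal, so as written the induction step is a gap, not a proof.

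A secondary omission: in the complex case you need to know that the real Arveson extreme dilation you obtain is an Arveson extreme point of $\cDAC$, not merely of $\cDAR$. This is Lemma \ref{lem:RealArvIsComplexArv} in the paper, whose proof uses closure under complex conjugation to kill $\mathrm{Re}(\beta)$ and then a conjugation by $\mathrm{diag}(1,i)$ to kill $\mathrm{Im}(\beta)$; you also implicitly use Lemma \ref{lem:ComplexConjAReal} to replace $A$ by a real symmetric tuple so that ``the real free spectrahedron extracted from $A$'' actually defines the same complex set. These steps are short but must be supplied for the claimed conclusion that $Y$ is Arveson (hence its irreducible blocks absolute) extreme in $\cDAC$ with the bound $k\le 2\ell+n$.
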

\begin{proof}

The proof that $X \in \cDAR$ dilates to an Arveson extreme point of $\cDAR$ is given in Section \ref{sec:AbsSpanBoundProof}. We prove that $X \in \cDAC$ dilates to an Arveson extreme point of $\cDAC$ in Section \ref{sec:ComplexSpectrahedra}.

We now prove that $\cD_A^\K$ is the matrix convex hull of its absolute extreme points. Let $X \in \cDAK$. The first part of Theorem \ref{thm:AbsSpanBound} shows that, in the complex setting, there is an Arveson extreme point $Y \in \cDAK (n+k)$ for some $k \leq 2\dim \fK^\K_{A,X}+n$ such that $X$ is a compression of $Y$.

The $g$-tuple $Y$ is unitarily equivalent to a direct sum of $m$ irreducible tuples $\{Y^i\}_{i=1}^m$ for some integer $m$. These too are Arveson, hence absolute, extreme points, see Theorem \ref{theorem:EHKMRealComplex}.  Since $X$ is a compression of $Y$, it follows that $X$ is a compression of $\oplus_{i=1}^m Y^i$. Equivalently, there is an isometry $V:\K^n \to \K^{n+k}$ such that 
$X= V^* (\oplus_{i=1}^m Y^i)V$.
 Decomposing
$V^*=\begin{pmatrix} V_1^* & \cdots & V_m^* \end{pmatrix}$
with respect to the block structure of $(\oplus_{i=1}^m Y^i)$ gives
\beq
\label{eq:XMatConvOfAbs}
X=\sum_{i=1}^m V_i^* Y^i V_i \quad \quad \sum_{i=1}^m V_i^* V_i=I_n \quad \quad \mathrm{with\ } Y^i \in \cDAK (n_i) \mathrm{\ and \ } \sum_{i=1}^m n_i =n+k.
\eeq
That is, $X$ is a matrix convex combination of the absolute extreme points $Y^1, \dots, Y^m$.

The proof when $X$ is a $g$-tuple of $n \times n$ real symmetric matrices is identical with $n+k$ replaced by $n+\tilde{k}$ where $\tilde{k} \leq \dim \fK_{A,X}^\K.$
\end{proof}

We comment that there are examples of a free spectrahedron $\cDAK$ and an irreducible tuple $X \in \cDAK$ and an Arveson dilation $Y$ of $X$ that has minimal size such that $Y$ is reducible. 

\subsection{Reader's guide}

Section \ref{sec:ProofsComps} introduces the notion of a maximal $1$-dilation of an element of a free spectrahedron. The main result of this section is Theorem \ref{theorem:1DiaReduceKerConDim} which implies that, in the real setting, Arveson dilations of a tuple $X \in \cDAR$ can be constructed by taking a sequence of maximal $1$-dilations of $X$. This result is then used to prove Theorem \ref{thm:AbsSpanBound} \eqref{it:RealSpecBound} when $\K=\R$. The section ends with Proposition \ref{prop:ArvDiAlg} which gives a numerical algorithm that can be used to construct Arveson dilations of elements of a real free spectrahedron.

Section \ref{sec:ComplexSpectrahedra} considers the case where $\K=\C$ and  completes the proof of Theorem \ref{thm:AbsSpanBound}. This is accomplished by showing that, when $\cDAC$ is closed under complex conjugation, the absolute extreme points of $\cDAR$ are absolute extreme points of $\cDAC$. We then show that every element of a complex free spectrahedron which is closed under complex conjugation is a compression of an element of the associated real free spectrahedron. An appeal to Theorem \ref{theorem:1DiaReduceKerConDim} completes the proof. In addition this section gives a classification of free spectrahedra which are closed under complex conjugation.

Section \ref{sec:Remarks} expands on the historical context of our main results. Section \ref{sec:FreeCaratheodory} describes a count on the number of parameters needed to express a tuple as a matrix convex combination of absolute extreme points which is given by Theorem \ref{thm:AbsSpanBound}. Section \ref{sec:GeneralMatConv} compares our results to results for general matrix convex sets, and Section \ref{sec:AltContext} discusses the original terminology and viewpoint of \cite{A69}, \cite{DM05}, and \cite{DK15}.

An appendix, Section \ref{sec:NCLDL}, contains a discussion of the NC LDL$^*$ calculation which appears in the proof of Theorem \ref{theorem:1DiaReduceKerConDim}. In addition, the appendix contains a proof of the real analogue of \cite[Theorem 1.1 (3)]{EHKM17}.

The authors thank Igor Klep and Scott McCullough for comments on the original version of this manuscript.

\section{Real free spectrahedra}
\label{sec:ProofsComps}
We first consider the case of Theorem \ref{thm:AbsSpanBound} where $X$ is an element of $\cDAR$ . We begin with a collection of lemmas and definitions which will play an important role in the proof of this case.

\subsection{The dilation subspace}
\label{sec:DiSubspace}

The subspace $\fK^\K_{A,X}$ is called the dilation subspace since, by considering the Schur complement, a tuple $\beta \in M_{n \times 1} (\K)^g$ is an element of $\fK^\K_{A,X}$ if and only if there is a real number $c > 0$ and a tuple $\gamma \in \R^g$ such that
\beq
\label{eq:DiSubConnection}
Y=\begin{pmatrix}
X & c\beta \\
c\beta^* & \gamma
\end{pmatrix} \in \cDAK.
\eeq

The following lemma explains the relationship between the dilation subspace $\fK^\K_{A,X}$ and dilations of the tuple $X \in \cDAK$ in greater detail. 

\begin{lem}
\label{lem:DiSubProperties}
Let $\cDAK$ be a free spectrahedron and let $X \in \cDAK (n)$. 
\begin{enumerate}
\item
\label{it:DiBetaIsInDiSub} If $\beta \in M_{n \times 1} (\K)^g$ and 
\[
Y= \begin{pmatrix}
X & \beta \\
\beta^* & \gamma
\end{pmatrix} \in \cDAK (n+1)
\]
is a $1$-dilation of $X$, then $\beta \in \fK^\K_{A,X}$. 
\item
\label{it:DiSubDilatesX}
Let $\beta \in M_{n \times 1} (\K)^g$. Then $\beta \in \fK^\K_{A,X}$ if and only if there is a tuple $\gamma \in \cDAK (1)$ real number $c_\gamma>0$ such that
\[
\begin{pmatrix}
X & c_\gamma \beta \\
c_\gamma \beta^* & \gamma
\end{pmatrix} \in \cDAK (n+1).
\]
In particular, one may take $\gamma=0 \in \K^g$.
\item 
\label{it:XArvIFFDiSubIsZero}
$X$ is an Arveson extreme point of $\cDAK$ if and only if $\dim \fK^\K_{A,X}=0$.
\end{enumerate}
\end{lem}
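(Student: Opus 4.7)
The plan is to express $L_A(Y)$ in block form matching the splitting $\K^{d(n+1)}=\K^{dn}\oplus \K^d$. Writing $\K^{d(n+1)}=\K^d\otimes(\K^n\oplus\K)=(\K^d\otimes\K^n)\oplus(\K^d\otimes\K)$ and computing block by block yields the fundamental identity
\[
L_A(Y)=\begin{pmatrix} L_A(X) & \Lambda_A(\beta)\\ \Lambda_A(\beta^*) & L_A(\gamma)\end{pmatrix},
\]
in which $\Lambda_A(\beta)=\sum_k A_k\otimes\beta_k$ is a $dn\times d$ matrix with adjoint $\Lambda_A(\beta^*)$. All three parts of the lemma will follow from this identity together with standard positive-semidefinite arguments.

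For (1), assume $L_A(Y)\succeq 0$ and pick $v\in\ker L_A(X)$. Then $\left\langle L_A(Y)\begin{pmatrix}v\\0\end{pmatrix},\begin{pmatrix}v\\0\end{pmatrix}\right\rangle=v^*L_A(X)v=0$, and PSD-ness forces $L_A(Y)\begin{pmatrix}v\\0\end{pmatrix}=0$. Reading off the bottom block of this equation gives $\Lambda_A(\beta^*)v=0$, which is precisely the condition $\beta\in\fK^\K_{A,X}$.

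For (2), the $(\Rightarrow)$ direction is an instance of (1). For the converse, set $\gamma=0\in\cDAK(1)$ so that $L_A(\gamma)=I_d$ and apply the Schur complement: membership of the displayed matrix in $\cDAK(n+1)$ reduces to $L_A(X)-c^2\Lambda_A(\beta)\Lambda_A(\beta^*)\succeq 0$. The hypothesis $\beta\in\fK^\K_{A,X}$ makes $\Lambda_A(\beta)\Lambda_A(\beta^*)$ vanish on $\ker L_A(X)$, so both operators are supported on $(\ker L_A(X))^\perp$. On that subspace $L_A(X)$ is bounded below by its smallest positive eigenvalue $\lambda>0$, while $\Lambda_A(\beta)\Lambda_A(\beta^*)$ is bounded above by $\mu=\|\Lambda_A(\beta)\Lambda_A(\beta^*)\|$; any $c$ with $0<c^2\le\lambda/\mu$ therefore works (and any $c>0$ works when $\mu=0$).

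For (3), the $(\Rightarrow)$ implication is the contrapositive of (2): a nonzero $\beta\in\fK^\K_{A,X}$ yields, via (2), a nontrivial $1$-dilation of $X$, contradicting Arveson extremity. For the $(\Leftarrow)$ implication we need to pass from arbitrary dilations down to $1$-dilations. Given a nontrivial $\ell$-dilation of $X$ with off-diagonal block $\beta\in M_{n\times \ell}(\K)^g$ not identically zero, choose a unit vector $v\in\K^\ell$ with $\beta v\ne 0$ (possible since some $\beta_k$ is nonzero) and compress by the isometry $V=I_n\oplus v$. The result is a nontrivial $1$-dilation of $X$, whose off-diagonal $\beta v$ lies in $\fK^\K_{A,X}$ by (1), so $\dim\fK^\K_{A,X}\ge 1$. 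The main technical hinge is the block-matrix identity for $L_A(Y)$; once that is in hand each part reduces to a short positive-semidefinite calculation.
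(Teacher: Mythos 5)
Your proof is correct and follows essentially the same route as the paper: identify $\K^{d(n+1)}\cong(\K^d\otimes\K^n)\oplus\K^d$ (the canonical shuffle) to get the block form of $L_A(Y)$, use the Schur complement with $\gamma=0$ for item (2), and deduce item (3) from (1) and (2). Your small variations are fine and in fact fill in details the paper leaves implicit---the direct ``$w^*Mw=0\Rightarrow Mw=0$'' kernel argument for (1) in place of reading it off the Schur complement, and the compression by $I_n\oplus v$ reducing an arbitrary nontrivial $\ell$-dilation to a nontrivial $1$-dilation in (3); just note that the two directions of the equivalence in (2) are labeled in the opposite order from the statement, which is harmless.
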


\begin{proof}
Item \eqref{it:DiBetaIsInDiSub} follows from considering the Schur complement of $L_A (Y)$ for a dilation 
\[
Y= \begin{pmatrix}
X & \beta \\
\beta^* & \gamma
\end{pmatrix} \in \cDAK (n+1)
\]
of $X$. Indeed, multiplying $L_A (X)$ by permutation matrices, sometimes called canonical shuffles, see \cite[Chapter 8]{P02}, shows 
\beq
\label{eq:2x2CanShuf}
L_A (Y) \succeq 0 \quad \mathrm{ if \ and \ only \ if } \quad \begin{pmatrix} L_A (X) & \Lambda_A (\beta) \\ \Lambda_A (\beta^*) & L_A (\gamma) \end{pmatrix} \succeq 0.
\eeq
Taking the appropriate Schur complement then implies that
\beq
\label{eq:2x2SchurComp}
L_A (Y) \succeq  0 \mathrm{\ if\ and\ only\ if\ } L_A (\gamma) \succeq 0 \mathrm{\ and\ } L_A (X)-\Lambda_A (\beta) L_A (\gamma)^\dagger \Lambda_A (\beta^*) \succeq 0
\eeq
where $\dagger$ denotes the Moore-Penrose pseudoinverse. Item \eqref{it:DiBetaIsInDiSub} is an immediate consequence of equation \eqref{eq:2x2SchurComp}.  See \cite[Corollary 2.3]{EHKM17} for a related argument.

We now prove item \eqref{it:DiSubDilatesX}. Note that $L_A (0)=I$, so similar to before using the Schur complement shows
\[
Y_0=\begin{pmatrix}
X & c\beta \\
c\beta^* & 0
\end{pmatrix} \in \cDAK (n+1)
\]
if and only if
\beq
\label{eq:2x2SchurCompOnZero}
 L_A (X)-c^2 \Lambda_A (\beta) \Lambda_A (\beta^*) \succeq 0.
\eeq
If $\beta \in \fK^\K_{A,X}$ then $\ker L_A (X) \subset \ker \Lambda_A (\beta) \Lambda_A (\beta^*)$, so picking $c$ small enough so that $\|c^2 \Lambda_A (\beta) \Lambda_A (\beta^*)\|_2$ is less than the smallest nonzero eigenvalue of $L_A (X)$ guarantees that inequality \eqref{eq:2x2SchurCompOnZero} holds, hence $Y_0 \in \cDAK (n+1)$. The reverse direction is a consequence of item \eqref{it:DiBetaIsInDiSub}.

Item \eqref{it:XArvIFFDiSubIsZero} follows from items \eqref{it:DiBetaIsInDiSub} and \eqref{it:DiSubDilatesX}.
\end{proof}

\begin{remark}
The choice of $\gamma=0 \in \K^g$ in Lemma \ref{lem:DiSubProperties} \eqref{it:DiSubDilatesX} helps simplify the NC LDL$^*$ calculations used in the forthcoming proof of Theorem \ref{theorem:1DiaReduceKerConDim}.
\end{remark}

\subsection{Maximal 1-dilations}

An important aspect of the proof of our main result is constructing dilations which satisfy a notion of maximality. Given a matrix convex set $K$ and a tuple $X \in K (n)$, say the dilation 
\[
Y=\begin{pmatrix}
X & c\hbeta \\
c\hbeta^* & \hgamma
\end{pmatrix} \in K(n+1)
\]
is a \df{maximal $1$-dilation} of $X$ if $Y$ is a $1$-dilation of $X$ and $\hbeta$ is nonzero and the real number $c$ and tuple $\hgamma \in \R^g$ are solutions to the sequence of maximization problems 
\[
\begin{array}{rllcl}  c:=&\underset{\alpha \in \R, \gamma \in \R^g}{\mathrm{Maximizer}} \ \ \ \ \alpha \\
\mathrm{s.t.} & \begin{pmatrix}

X & \alpha \hbeta \\
\alpha \hbeta^* & \gamma
\end{pmatrix} \in K(n+1) \\
\\
\mathrm{and } \quad \hgamma:=&\underset{ \gamma \in \R^g}{\mathrm{A \ Local \ Maximizer}} \ \   \|\gamma\| \\
\mathrm{s.t.}  &\begin{pmatrix}

X & c \hbeta \\
c \hbeta^* & \gamma
\end{pmatrix} \in K (n+1)
\end{array}
\]
where $\| \cdot \|$ denotes the usual norm on $\R^g$. We note that maximal $1$-dilations can be computed numerically, see Proposition \ref{prop:ArvDiAlg}. We emphasize that $\hgamma$ produced by the second optimization need only be any local maximizer, and global maximality is not required.

\begin{remark}
\label{remark:MaxDiExist}
\rm If $K$ is a compact matrix convex set and $X \in K$ is not an Arveson extreme point of $K$, then a routine compactness argument shows the existence of nontrivial maximal $1$-dilations of $X$.  \qed
\end{remark}

Other notions of maximal dilations (in the infinite dimensional setting) are discussed in \cite{DM05}, \cite[Section 2]{A08} and \cite[Section 1]{DK15}.

\subsection{Maximal dilations reduce the dimension of the dilation subspace}

Let $A \in \smdrg$, let $\cDAR$ be a compact real free spectrahedron, and let $X \in \cDAR$. The following theorem shows that maximal $1$-dilations of $X$ reduce the dimension of the dilation subspace. 

\begin{theorem}
\label{theorem:1DiaReduceKerConDim}
Let $A \in \smdrg$ be a $g$-tuple of self-adjoint matrices over $\R$ such that $\cDAR$ is a compact real free spectrahedron and let $X \in \cDAR (n)$. Assume $X$ is not an Arveson extreme point of $\cDAR$. Then there exists a nontrivial maximal 1-dilation $\hY \in \cDAR (n+1)$ of $X$. Furthermore, any such $\hY$ satisfies 
\[
\dim \fK^\R_{A, \hY} < \dim \fK^\R_{A,X}.
\]
\end{theorem}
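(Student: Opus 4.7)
The plan is to reduce the theorem to three facts about a natural linear map between dilation subspaces. Existence of a nontrivial maximal $1$-dilation $\hat Y = \begin{pmatrix} X & c\hat\beta \\ c\hat\beta^* & \hat\gamma \end{pmatrix}$ is essentially immediate: since $X$ is not Arveson extreme, Lemma \ref{lem:DiSubProperties} \eqref{it:XArvIFFDiSubIsZero} produces at least one nontrivial $1$-dilation, and compactness of $\cDAR$ together with closedness of the constraint sets makes the two successive scalar/vector optimizations in the definition of a maximal $1$-dilation attain. So the content is the strict inequality.

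To control $\dim \fK^\R_{A,\hat Y}$ I would introduce the projection map
\[
\pi : \fK^\R_{A,\hat Y} \longrightarrow M_{n\times 1}(\R)^g, \qquad \tilde\beta = \begin{pmatrix}\beta \\ b\end{pmatrix} \longmapsto \beta,
\]
and establish three claims: (a) the image of $\pi$ lies inside $\fK^\R_{A,X}$; (b) $\pi$ is injective; (c) $\hat\beta \in \fK^\R_{A,X}$ but $\hat\beta \notin \ran \pi$. Together (b) and (c) give $\dim \fK^\R_{A,\hat Y} = \dim \ran \pi \leq \dim \fK^\R_{A,X} - 1$, which is the desired strict drop.

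Claim (a) is a short compression argument: given $\tilde\beta\in\fK^\R_{A,\hat Y}$, Lemma \ref{lem:DiSubProperties} \eqref{it:DiSubDilatesX} applied to $\hat Y$ yields a $1$-dilation of $\hat Y$ with bottom-right equal to $0$; the principal $(n+1)\times(n+1)$ submatrix indexed by rows $\{1,\dots,n,n+2\}$ is then a compression of this dilation by an isometry, belongs to $\cDAR$, and is exactly the dilation of $X$ witnessing $\beta\in\fK^\R_{A,X}$ via Lemma \ref{lem:DiSubProperties} \eqref{it:DiBetaIsInDiSub}. Claim (c) uses the maximality of $c$: if $(\hat\beta,b)\in\fK^\R_{A,\hat Y}$, the same construction produces a $2$-dilation of $X$ with both $c\hat\beta$ and $c'\hat\beta$ in the off-diagonals; rotating the last two rows/columns by the isometry $\mathrm{diag}(I_n,(s_1,s_2)^T)$ with $s_1=c/\sqrt{c^2+(c')^2}$, $s_2=c'/\sqrt{c^2+(c')^2}$ compresses to a $1$-dilation of $X$ along $\hat\beta$ with scalar $\sqrt{c^2+(c')^2}>c$, contradicting the choice of $c$ as a maximizer.

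Claim (b), injectivity, is the main obstacle and is where the local maximality of $\|\hat\gamma\|$ enters. Suppose $\tilde\beta=(0,b)\in\fK^\R_{A,\hat Y}$. The plan is to recast the kernel condition $\ker L_A(\hat Y)\subset\ker\Lambda_A(\tilde\beta^*)$ as the statement that the self-adjoint rank-one (in the $(n+1)$-factor) perturbation
\[
P \;=\; \Lambda_A(b)\otimes e_{n+1}e_{n+1}^*
\]
annihilates $\ker L_A(\hat Y)$; this is precisely the kind of translation carried out by the NC LDL$^*$ manipulation of the appendix. A standard lemma on PSD perturbations vanishing on the kernel then gives $L_A(\hat Y)+tP\succeq 0$ for all small $|t|$, equivalently
\[
\begin{pmatrix} X & c\hat\beta \\ c\hat\beta^* & \hat\gamma + tb \end{pmatrix} \in \cDAR.
\]
Expanding $\|\hat\gamma+tb\|^2 = \|\hat\gamma\|^2 + 2t\langle\hat\gamma,b\rangle + t^2\|b\|^2$ and invoking the local maximality of $\|\hat\gamma\|$ in both directions $t\gtrless 0$ forces $\langle\hat\gamma,b\rangle=0$ and then $\|b\|^2\le 0$, so $b=0$. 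The key technical point here, and the one I expect to be the main obstacle to write cleanly, is the PSD-perturbation reduction; I would package this as the NC LDL$^*$ calculation referenced in the appendix, applied to $L_A(\hat Y)$ in its canonically shuffled block form.
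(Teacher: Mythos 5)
Your proposal is correct and follows the same overall strategy as the paper's proof: both introduce the projection from $\fK^\R_{A,\hY}$ onto the first $n$ coordinates (your $\pi$, the paper's $\iota$ with image $\fE_{A,\hY}$), both establish $\ran\pi\subset\fK^\R_{A,X}$ by the same $3\times 3$ compression, and both use the two layers of maximality — the scalar $c$ and the local norm of $\hgamma$ — to rule out, respectively, that $\hbeta$ is in the image and that $\pi$ has a nonzero kernel. Where you genuinely diverge is in how the two decisive inequalities are extracted. For claim (c) the paper pushes the $3\times 3$ dilation through the NC LDL$^*$ decomposition (equations \eqref{eq:3LDLMiddle}--\eqref{eq:betterBetaDi}) to manufacture a dilation with coefficient $\sqrt{1\pm c\alpha}$, whereas you simply compress by $\mathrm{diag}\bigl(I_n,(s_1,s_2)^T\bigr)$ and land in one step on coefficient $\sqrt{c^2+(c')^2}>c$; this bypasses the Schur-complement machinery and is the cleaner route. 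For claim (b) you skip the $3\times 3$ dilation and the Schur complement $L_A(\hgamma)-Q$ used in \eqref{eq:3LDLSig}--\eqref{eq:hsigBadNorm} entirely, instead reading the membership $(0,b)\in\fK^\R_{A,\hY}$ as $\ker L_A(\hY)\subset\ker P$ with $P=\Lambda_A(b)\otimes e_{n+1}e_{n+1}^*$ and invoking the elementary fact that a symmetric perturbation supported off the kernel of a PSD matrix can be added with either sign. That finite-dimensional fact is exactly the same ingredient the paper uses — only applied to $L_A(\hY)$ rather than to the $d\times d$ Schur complement — and it is precisely the step Section \ref{sec:GeneralMatConv} flags as failing in infinite dimensions, so nothing is lost. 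As a bonus, your packaging (prove $\pi$ injective, then count dimensions) replaces the paper's two proofs by contradiction and quietly fixes a small wrinkle: the paper's second contradiction is stated only for $\dim\fK^\R_{A,\hY}=\dim\fK^\R_{A,X}$ rather than $\geq$, while your injectivity argument covers all cases at once.
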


\begin{proof}

Let $\hY$ be a maximal $1$-dilation of $X$. Equivalently, choose the dilation $\hY$ (choose $\hbeta$ and $\hgamma)$ such that
\[
\hY=\begin{pmatrix}
X & \hbeta \\
\hbeta^* & \hgamma
\end{pmatrix} \mathrm{ \ is \ in \ } \cDAR (n+1),
\]
and if
\[
\tilde{Y}_c= \begin{pmatrix} X & c \hbeta \\
c\hbeta^* & \gamma
\end{pmatrix} \mathrm{ \ is \ in \ } \cDAR (n+1)
\]
for a tuple $\gamma \in \R^g$ and a real number $c \in \R$, then $c \leq 1$.\footnote{ If $\tilde{Y}_c$ is an element of $\cDAR (n+1)$ then so is $\tilde{Y}_{-c}$. For this reason, it is equivalent to require $|c| \leq 1$. } Furthermore, if $c=1$ and $\tilde{Y} \in \cDAR (n+1)$, then there exists an $\epsilon>0$ such that $\|\hgamma-\gamma\|<\epsilon$ implies $\|\gamma\| \leq \|\hgamma\|$. As mentioned in Remark \ref{remark:MaxDiExist}, the existence of such a $\hY$ follows from the assumptions that $X$ is not an Arveson extreme point of $\cDAR$ and that $\cDAR$ is level-wise compact. 

We will show that
\[
\dim \fK^\R_{A,\hY} < \dim \fK^\R_{A,X}.
\]
First consider the subspace
\[
\fE_{A,\hY}:=\{ \eta \in M_{n \times 1} (\R)^g \  | \mathrm{\ there \ exists \ a \ } \sigma\in \R^g \mathrm{\ so \ that \ } \ker L_A (\hY) \subseteq \ker \Lambda_A \begin{pmatrix} \eta^* & \sigma \end{pmatrix} \}.
\]
\index{$\fE_{A,\hY}$}
In other words $\fE_{A,\hY}$ is the projection $\iota$ of $\fK^\R_{A,\hY}$ defined by
\[
\fE_{A,\hY}:=\iota (\fK^\R_{A,X}) \mathrm{\ where \ } \iota \begin{pmatrix} \eta \\
\sigma 
\end{pmatrix}=\eta
\]
for $\eta \in M_{n \times 1} (\R)^g$ and $\sigma \in \R^g$. We will show $\dim \fE_{A,\hY} < \dim \fK^\R_{A,X}$.

If $\eta \in \fE_{A,\hY}$, then there exists a tuple $\tilde{\sigma}\in \R^g$ such that 
\[
\begin{pmatrix} \eta^* & \tilde{\sigma} \end{pmatrix} \in \fK^\R_{A,\hY}.
\]
From Lemma \ref{lem:DiSubProperties} \eqref{it:DiSubDilatesX}, it follows that there is a real number $c>0$ so that setting $\sigma=c \tilde{\sigma}$ gives
\[
\begin{pmatrix}
X & \hbeta & c\eta \\
\hbeta^* & \hgamma & \sigma \\
c\eta^* &  \sigma ^* & 0 
\end{pmatrix}
\in \cDAR.
\]
Since $\cDAR$ is matrix convex it follows that
\[
\begin{pmatrix}
1 & 0 & 0 \\
0 & 0 & 1
\end{pmatrix}
\begin{pmatrix}
X & \hbeta & c\eta \\
\hbeta^* & \hgamma & \sigma \\
c\eta^* &  \sigma ^* & 0 
\end{pmatrix}
\begin{pmatrix}
1 & 0  \\
0 & 0 \\
0 & 1
\end{pmatrix}=
\begin{pmatrix} 
X & c \eta \\
c \eta^* & 0 
\end{pmatrix} \in \cDAR,
\]
so Lemma \ref{lem:DiSubProperties} \eqref{it:DiBetaIsInDiSub} shows $\eta \in \fK^\R_{A,X}$. In particular this shows
\beq
\label{eq:diKerConSubset}
\fE_{A,\hY} \subset \fK^\R_{A,X}.
\eeq

Now, assume towards a contradiction that 
\[
\dim \fE_{A,\hY} =\dim \fK^\R_{A,X}.
\]
Using equation \eqref{eq:diKerConSubset} this implies that
\[
\fE_{A,\hY} = \fK^\R_{A,X}.
\]
In particular we have $\hbeta \in \fE_{A,\hY}$. It follows that there is a real number $c \neq 0$ and a tuple $\sigma \in \R^g$ so that
\beq
\label{eq:badBetaDi}
L_A\begin{pmatrix}
X & \hat{\beta} & c \hat{\beta} \\
\hat{\beta}^* & \hat{\gamma} & \sigma \\
c\hat{\beta}^* & \sigma & 0
\end{pmatrix}\succeq 0.
\eeq

Using the NC LDL$^*$-decomposition (up to canonical shuffles) shows that inequality \eqref{eq:badBetaDi} holds if and only if $L_A (X) \succeq 0$ and the Schur complements
\beq
\label{eq:3LDLMiddle}
I_d-c^2 Q \succeq 0
\eeq
and
\beq
\label{eq:3LDL}
L_A (\hgamma)-Q-\left(\Lambda_A(\sigma)-cQ\right)^*\left(I_d-c^2 Q\right)^\dagger \left(\Lambda_A(\sigma)-cQ\right) \succeq 0
\eeq
where 
\beq
\label{eq:Qdef}
Q:= \Lambda_A (\hbeta^*) L_A (X)^\dagger \Lambda_A (\hbeta).
\eeq
It follows that
\beq
\label{eq:gammaSchur}
L_A (\hat{\gamma})-Q \succeq 0
\eeq
and
\beq
\label{eq:3diaKerContain}
\ker  [L_A (\hat{\gamma})-Q] \subseteq \ker [\Lambda_A(\sigma)-c Q].
\eeq

Inequalities \eqref{eq:gammaSchur} and \eqref{eq:3diaKerContain} imply that there exists a real number $\tilde{\alpha} >0$ such that $0 < \alpha \leq \tilde{\alpha}$ implies
\[
L_A (\hat{\gamma})-Q \  \pm \ \alpha \left(\Lambda_A(\sigma)-c Q\right) \succeq 0.
\]
It follows from this that
\beq
\label{eq:betterBetaSchur}
\begin{array}{rllcl}
&L_A (\hat{\gamma} \pm \alpha \sigma)- (1 \pm c\alpha)Q\\
=& L_A (\hat{\gamma} \pm \alpha \sigma)-\left(\Lambda_A (\sqrt{1 \pm c\alpha}\hat{\beta}^*)L_A(X)^\dagger\Lambda_A(\sqrt{1 \pm c\alpha}\hat{\beta})\right)& \succeq & 0.
\end{array}
\eeq
Since $L_A(X) \succeq 0$, equation \eqref{eq:betterBetaSchur} implies
\beq
\label{eq:betterBetaDi}
L_A \begin{pmatrix}
X & \sqrt{1 \pm c\alpha}\hat{\beta} \\
\sqrt{1 \pm c\alpha}\hat{\beta}^* & \hat{\gamma}\pm \alpha \sigma
\end{pmatrix}\succeq 0.
\eeq
Therefore, from our choice of $\hY$, hence of $\hbeta$, we must have 
\[
\sqrt{1 \pm c\alpha}  \leq 1.
\]
It follows that $c\alpha=0$. However, we have assumed $\alpha>0$ and $c \neq 0$, so this is a contradiction. We conclude
\[
\dim \fE_{A,\hY} < \dim \fK^\R_{A,X}.
\]

Now seeking a contradiction assume $\dim \fK^\R_{A,\hY}= \dim \fK^\R_{A,X}$. Since $\dim \fE_{A,\hY} < \dim \fK^\R_{A,X}$, there must exist tuples $\eta \in M_{n \times 1} (\R)^g$ and $\sigma^1, \sigma^2 \in \R^g$ such that $\sigma^1 \neq \sigma^2$ and so
\[
\begin{pmatrix}
\eta \\
\sigma^1
\end{pmatrix},
\begin{pmatrix}
\eta \\
\sigma^2
\end{pmatrix} \in \fK^\R_{A,\hY}.
\]
It follows that 
\beq
\label{eq:hsigmaKerCon}
\begin{pmatrix}
0 \\
\sigma^1-\sigma^2
\end{pmatrix} \in \fK^\R_{A,\hY}.
\eeq
Set $\hsigma=\sigma^1-\sigma^2 \neq 0 \in \R^g$. As before, equation \eqref{eq:hsigmaKerCon} with Lemma \ref{lem:DiSubProperties} \eqref{it:DiSubDilatesX} implies that there is a real number $c \neq 0 \in \R$ so that
\beq
\label{eq:BadSigDi}
L_A
\begin{pmatrix}
X & \hbeta & 0 \\
\hbeta^* & \hgamma & c \hsigma \\
0 & c \hsigma & 0 
\end{pmatrix} 
\succeq 0.
\eeq
Considering the NC LDL$^*$ decomposition shows that equation \eqref{eq:BadSigDi} holds if and only if
\beq
\label{eq:3LDLSig}
 L_A (X)\succeq 0 \quad \ \mathrm{and} \ \quad  L_A (\hat{\gamma})-Q-c^2 \Lambda_A (\hsigma)\Lambda_A(\hsigma) \succeq 0,
\eeq
where $Q=\Lambda_A(\hat{\beta^*})L_A (X)^\dagger \Lambda_A(\hat{\beta})$ as before. It follows from this that
\beq
\label{eq:3diaKerContainSig}
\ker [ L_A (\hat{\gamma})-Q] \subseteq \ker \Lambda_A(\hsigma) \quad \mathrm{and} \quad L_A (\hat{\gamma})-Q \succeq 0.
\eeq
This implies that there is a real number $\tilde{\alpha}>0$ so that, for all $\alpha \in \R$ satisfying $0 < \alpha \leq \tilde{\alpha}$, we have
\[
 L_A (\hat{\gamma})-Q\pm \Lambda_A (\alpha \hsigma) = L_A (\hat{\gamma} \pm \alpha \hsigma)-Q\succeq 0.
\]
Since this is the appropriate Schur complement and since $L_A (X) \succeq 0$ it follows that 
\beq
\label{eq:hsigBadNorm}
L_A \begin{pmatrix}
X & \hbeta \\
\hbeta^* & \hgamma \pm \alpha \hsigma 
\end{pmatrix} \succeq 0
\eeq
whenever $0< \alpha \leq \tilde{\alpha}$. Therefore, the local maximality of $\hgamma$ implies
\[
\| \hgamma+\alpha \hsigma\| \leq \|\hgamma\| \mathrm{\ and \ } \| \hgamma-\alpha \hsigma\| \leq \|\hgamma\|
\]
for sufficiently small $\alpha \in (0, \tilde{\alpha}]$, a contradiction to the assumptions that $\alpha \neq 0$ and $\hsigma \neq 0$. We conclude that $\dim \fK^\R_{A,\hY} < \dim \fK^\R_{A, X}$ as asserted by Theorem \ref{theorem:1DiaReduceKerConDim}. 
\end{proof}

\subsubsection{Proof of Theorem \ref{thm:AbsSpanBound} for real free spectrahedra}
\label{sec:AbsSpanBoundProof}
We are now in position to prove Theorem \ref{thm:AbsSpanBound} in the case where $\cDAR$ is a compact real free spectrahedron.

\noindent{\it Proof of Theorem \ref{thm:AbsSpanBound} when $\K=\R$.} Given a tuple $X \in \cDAR$ with $\dim \fK^\R_{A,X}=\ell$, the existence of a $k$-dilation $Y$ of $X$ such that $Y \in \arv \cDAR$ for some $k \leq \ell$ is an immediate consequence of Theorem \ref{theorem:1DiaReduceKerConDim} and Lemma \ref{lem:DiSubProperties} \eqref{it:XArvIFFDiSubIsZero}.

The fact that $\cDAR$ is the matrix convex hull of its Arveson extreme points, hence of its absolute extreme points, is proved immediately after the statement of Theorem \ref{thm:AbsSpanBound}. $\hfill\qedsymbol$

\subsection{Numerical computation}
\label{sec:Computation}

Given a compact real free spectrahedron $\cDAR$, the following algorithm dilates a tuple $X \in \cDAR$ to an Arveson extreme point $Y \in \cDAR$ in $\dim \fK^\R_{A,X}$ steps or less.

\begin{prop}
\label{prop:ArvDiAlg}
Let $A \in \smdrg$ be a $g$-tuple of self-adjoint matrices over $\R$ such that $\cDAR$ is a compact real free spectrahedron. Given a tuple $X \in \cDAR (n)$, set $Y^0=X$. For integers $k=0,1,2\dots$ and while $\dim \fK^\R_{A,Y^k} >0$ define 
\[
Y^{k+1}:=\begin{pmatrix}
Y^k & c_k \hbeta^k \\
c_k (\hbeta^k)^* & \hgamma^k
\end{pmatrix}
\]
where $\hbeta^k$ is any nonzero element of $\fK^\R_{A,Y^k}$ and 
\[
\begin{array}{rllcl}  c_k:=&\underset{c \in \R, \gamma \in \R^g}{\mathrm{Maximizer}} \ \ \ \ c \\
\mathrm{s.t.} &L_A \begin{pmatrix}

Y^k & c\hbeta^k \\
c(\hbeta^k)^* & \gamma
\end{pmatrix}\succeq 0 ,\\
\\
\mathrm{and } \quad \hgamma^k:=&\underset{\gamma \in \R^g}{\mathrm{A \ Local \ Maximizer}} \ \ \|\gamma\| \\
\mathrm{s.t.}  &L_A \begin{pmatrix}

Y^k & c_k \hbeta^k \\
c_k (\hbeta^k)^* & \gamma
\end{pmatrix}\succeq 0.
\end{array}
\]
Then $\dim \fK^\R_{A,Y^\ell}=0$ for some integer $\ell \leq \dim \fK^\R_{A,X} \leq ng$ and $Y^\ell$ is an Arveson $\ell$-dilation of $X$.
\end{prop}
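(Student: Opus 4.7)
The result is essentially a packaging of Theorem \ref{theorem:1DiaReduceKerConDim} into an iterative algorithm, so my plan is to verify that each iteration matches the hypotheses of that theorem and then track dimensions.

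First I would check that the algorithm is well-defined whenever $\dim \fK^\R_{A,Y^k}>0$. The nonzero vector $\hbeta^k$ exists because the dilation subspace is nontrivial. By Lemma \ref{lem:DiSubProperties} \eqref{it:DiSubDilatesX} applied to $Y^k$ and $\hbeta^k$, there is a strictly positive real number $c$ and tuple $\gamma=0 \in \R^g$ which are feasible in the first optimization, so the feasible set is nonempty. Compactness of $\cDAR$ forces both optimizations to have compact feasible sets (viewed as compact subsets of $\R \times \R^g$ and $\R^g$ respectively), so $c_k$ and some local maximizer $\hgamma^k$ exist, and $c_k>0$. Consequently $Y^{k+1}$ is a bona fide $1$-dilation of $Y^k$ in $\cDAR(\sum_{j\le k+1}1 + n)$.

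Next I would verify that $Y^{k+1}$ fits the definition of a \emph{maximal $1$-dilation} of $Y^k$ given in Section~2.2. The off-diagonal block is $c_k\hbeta^k$ with $\hbeta^k\neq 0$, the scalar $c_k$ is a global maximizer of the amplitude $\alpha$ over feasible pairs $(\alpha,\gamma)$ with off-diagonal $\alpha\hbeta^k$, and $\hgamma^k$ is a local maximizer of $\|\gamma\|$ among $\gamma$ that are feasible at $\alpha=c_k$. These are precisely the two defining optimization problems. Therefore Theorem \ref{theorem:1DiaReduceKerConDim} applies to the pair $Y^k,Y^{k+1}$ and yields the strict inequality
\[
\dim \fK^\R_{A,Y^{k+1}} \;<\; \dim \fK^\R_{A,Y^k}.
\]

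Since $\fK^\R_{A,X}$ is a subspace of $M_{n\times 1}(\R)^g$, we have $\dim\fK^\R_{A,Y^0}=\dim\fK^\R_{A,X}\le ng$. The strict monotonicity above forces the while loop to terminate at some $\ell\le \dim\fK^\R_{A,X}\le ng$ with $\dim\fK^\R_{A,Y^\ell}=0$. By Lemma \ref{lem:DiSubProperties} \eqref{it:XArvIFFDiSubIsZero} this is equivalent to $Y^\ell$ being an Arveson extreme point of $\cDAR$. Finally, a trivial induction on $k$ using the block structure shows $Y^\ell$ is an $\ell$-dilation of $Y^0=X$, completing the claim.

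There is essentially no obstacle here beyond bookkeeping: all substantive work is in Theorem \ref{theorem:1DiaReduceKerConDim}. The only mild subtlety is confirming the feasible sets of the two optimizations are compact and nonempty (so that ``Maximizer'' and ``Local Maximizer'' make sense), which I handled above via Lemma \ref{lem:DiSubProperties} \eqref{it:DiSubDilatesX} and level-wise compactness of $\cDAR$.
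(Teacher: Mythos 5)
Your proposal is correct and follows essentially the same route as the paper, which simply observes that the algorithm's iterates are maximal $1$-dilations so that Theorem \ref{theorem:1DiaReduceKerConDim} gives strict decrease of $\dim \fK^\R_{A,Y^k}$ and Lemma \ref{lem:DiSubProperties} \eqref{it:XArvIFFDiSubIsZero} certifies termination at an Arveson extreme point. Your added bookkeeping (feasibility via Lemma \ref{lem:DiSubProperties} \eqref{it:DiSubDilatesX} and existence of maximizers from level-wise compactness) matches Remark \ref{remark:MaxDiExist} and the setup in the proof of Theorem \ref{theorem:1DiaReduceKerConDim}.
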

\begin{proof}
This follows from the proof of Theorem \ref{theorem:1DiaReduceKerConDim}.
\end{proof}

The optimization over $c$ in Proposition \ref{prop:ArvDiAlg} is a semidefinite program, while the optimization over $\gamma$ is a local maximization of a convex quadratic over a spectrahedron.

\section{Complex free spectrahedra}
\label{sec:ComplexSpectrahedra}

This section will prove that every element of a compact complex free spectrahedron which is closed under complex conjugation is the matrix convex hull of its absolute extreme points. We begin with a lemma which shows that the set of real elements in the absolute boundary of a complex free spectrahedron $\cDAC$ which is closed under complex conjugation is exactly equal to the absolute boundary of $\cDAR$.

\begin{lem}
\label{lem:RealArvIsComplexArv}
Let $A$ be a $g$-tuple of $d \times d$ real symmetric matrices and let $X \in \cDAC$ be a $g$-tuple of $n \times n$ real symmetric matrices. Then $X$ is an Arveson extreme point of $\cDAC$ if and only if $X$ is an Arveson extreme point of $\cDAR$. 

\end{lem}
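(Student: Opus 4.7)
The forward direction is immediate: a nontrivial real dilation of $X$ is automatically a nontrivial complex dilation, so if no nontrivial complex dilation exists in $\cDAC$ then no nontrivial real dilation exists in $\cDAR$. The content of the lemma is the reverse implication, and I will prove it by the contrapositive.

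Suppose $X \in \cDAC$ is a tuple of real symmetric matrices that is not Arveson extreme in $\cDAC$. Then there is a nontrivial complex dilation
\[
Y = \begin{pmatrix} X & \beta \\ \beta^* & \gamma \end{pmatrix} \in \cDAC,
\qquad \beta \neq 0.
\]
Because $A$ is a tuple of real symmetric matrices, $L_A$ commutes with entrywise complex conjugation, so $L_A(\overline{Y}) = \overline{L_A(Y)} \succeq 0$ and therefore $\overline{Y} \in \cDAC$. Since $\cDAC(n+\ell)$ is convex, the Hermitian tuple
\[
\tfrac{1}{2}(Y+\overline{Y}) = \begin{pmatrix} X & \mathrm{Re}(\beta) \\ \mathrm{Re}(\beta)^* & \mathrm{Re}(\gamma) \end{pmatrix}
\]
also lies in $\cDAC$, and it is a tuple of real symmetric matrices. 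If $\mathrm{Re}(\beta) \neq 0$, then this is a nontrivial real dilation of $X$; viewed as a tuple in $\cDAR$ (real symmetric PSD is equivalent to complex Hermitian PSD), it witnesses that $X$ is not Arveson extreme in $\cDAR$.

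The one remaining case is $\mathrm{Re}(\beta)=0$, so $\beta = i\beta_I$ with $\beta_I$ real and nonzero. The plan is to rotate the imaginary part of $\beta$ into a real off-diagonal block by conjugation with the diagonal unitary $U = \mathrm{diag}(I_n, iI_\ell)$. A direct block-matrix computation (and the fact that unitary conjugation preserves $\cDAC$, since $L_A(U^*YU) = (I_d \otimes U^*) L_A(Y) (I_d \otimes U)$) shows
\[
U^* Y U = \begin{pmatrix} X & -\beta_I \\ -\beta_I^* & \gamma \end{pmatrix} \in \cDAC.
\]
Averaging this tuple with its complex conjugate as in the previous paragraph produces the Hermitian tuple
\[
\begin{pmatrix} X & -\beta_I \\ -\beta_I^* & \mathrm{Re}(\gamma) \end{pmatrix} \in \cDAC,
\]
which has all real entries and has nonzero off-diagonal block $-\beta_I$. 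Hence this is a nontrivial real dilation of $X$, so $X$ is not Arveson extreme in $\cDAR$.

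The only conceptual step is the trick in the second case, where a purely imaginary $\beta$ would be killed by straightforward averaging; the diagonal unitary $\mathrm{diag}(I,iI)$ converts it to a real block without leaving $\cDAC$. Everything else is routine manipulation using closure of $\cDAC$ under conjugation (automatic since $A$ is real) and under matrix convex combinations.
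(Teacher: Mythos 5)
Your proof is correct and uses the same two key moves as the paper's: averaging $Y$ with its complex conjugate to isolate $\mathrm{Re}(\beta)$, then conjugating by $\mathrm{diag}(I_n,\pm iI_\ell)$ to rotate the purely imaginary part into a real off-diagonal block. You organize it as a contrapositive (producing a nontrivial real dilation from a nontrivial complex one) while the paper argues directly (assuming $X\in\arv\cDAR$ and deducing $\mathrm{Re}(\beta)=0$ then $\mathrm{Im}(\beta)=0$), but the content is identical; you are also slightly more careful than the paper in replacing $\gamma$ by $\mathrm{Re}(\gamma)$ after the averaging step.
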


\begin{proof}
It is straightforward to show that $X$ is an Arveson extreme point of $\cDAR$ if $X$ is an Arveson extreme point of $\cDAC$. To prove the converse, assume $X$ is an Arveson extreme point of $\cDAR$ and let $\beta \in M_{n \times 1} (\C)^g$ be a tuple such that 
\[
\begin{pmatrix}
X & \beta \\
\beta^* & \gamma
\end{pmatrix} \in \cDAC.
\]

By assumption $A$ is a tuple of real symmetric matrices so $\cDAC$ is closed under complex conjugation. It follows that
\[
\overline{
\begin{pmatrix}
X & \beta \\
\beta^* & \gamma
\end{pmatrix}}
=
\begin{pmatrix}
X & \overline{\beta} \\
\overline{\beta}^* & \gamma
\end{pmatrix}
\in \cDAC.
\]
Since $\cDAC$ is convex we conclude that 
\[
\begin{pmatrix}
X & \mathrm{Re} (\beta) \\
\mathrm{Re} (\beta)^* & \gamma 
\end{pmatrix}=\frac{1}{2}
\left(\begin{pmatrix}
X & \beta \\
\beta^* & \gamma
\end{pmatrix}+
\begin{pmatrix}
X & \overline{\beta} \\
\overline{\beta}^* & \gamma
\end{pmatrix}\right) \in \cDAC.
\]
This matrix has real entries so it is an element of $\cDAR$. However, $X$ was assumed to be an Arveson extreme point of $\cDAR$ so we must have $\mathrm{Re} (\beta)=0.$

Now, $\cDAC$ is closed under unitary conjugation so we know 
\[
\begin{pmatrix}
X & i\beta \\
(i\beta)^* & \gamma
\end{pmatrix}
=\begin{pmatrix}
1 & 0 \\
0 & -i
\end{pmatrix}
\begin{pmatrix}
X & \beta \\
\beta^* & \gamma
\end{pmatrix}
\begin{pmatrix}
1 & 0 \\
0 & i
\end{pmatrix} \in \cDAC.
\]
However, this matrix is in $\cDAR$ since $\mathrm{Re} (\beta)=0$ from which it follows that $\mathrm{Im} (i \beta)=0$.
We have assumed that $X$ is an Arveson extreme point of $\cDAR$, so $i\beta=0$, hence $\beta=0$. We conclude that $X$ is an Arveson extreme point of $\cDAC$, as claimed. 
\end{proof}

Our next lemma gives a list of equalities for the dilation subspace which will be used in proving the bound on the dimension of the absolute extreme points appearing in Theorem \ref{thm:AbsSpanBound}.

\begin{lem}
\label{lem:DiSubEqsAndDims}
Let $\cDAK$ be a real or complex free spectrahedron. The following equalities hold for the dilation subspace:

\begin{enumerate}
\item
\label{it:DiSubDirectSum} Let $X  \in \cDAK (n_1)$ and $Z \in \cDAK (n_2)$. Then 
\[
\fK_{A,X \oplus Z}^\K=\left\{\begin{pmatrix} \beta^* & \sigma^* \end{pmatrix}^* \in M_{(n_1+n_2) \times 1} (\K)^g \big|\ \beta \in \fK_{A,X}^\K \ \mathrm{ and \ } \sigma \in \fK_{A,Z}^\K  \right\}.
\]
Additionally, 
\[
\fK_{A,X \oplus Z}^\K= \dim \fK_{A,X}^\K+ \dim \fK_{A,Z}^\K.
\]

\item
\label{it:DiSubUnitary}
 Let $X \in \cDAK(n)$ and let $U \in M_n (\K)$ be a unitary. Then
\[
\fK_{A,X}^\K= U^* \fK^\K_{A,U^* X U} \quad \mathrm{ and } \quad
\dim \fK_{A,X}^\K= \dim \fK^K_{A,U^* X U}\ .
\]

\item
\label{it:DiSubComplexConjugation} 
Assume $\cDAK$ is closed under complex conjugation. Then 
\[
\fK_{A,X}^\K=\overline{\fK^\K_{A,\overline{X}}} \quad \mathrm{ and } \quad \dim \fK_{A,X}^\K= \dim \fK^\K_{A,\overline{X}}\ .
\]
\end{enumerate}
\end{lem}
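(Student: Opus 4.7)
The plan is to prove each part directly from the defining kernel-containment description $\fK^\K_{A,X} = \{\beta \in M_{n\times 1}(\K)^g : \ker L_A(X) \subset \ker \Lambda_A(\beta^*)\}$, by tracking how $L_A$ and $\Lambda_A$ transform under the operation in question and reading off the corresponding transformation of the kernel-containment condition on the $\beta$ side. In each case this produces an explicit bijection between the two dilation subspaces, which is $\K$-linear (for parts \eqref{it:DiSubDirectSum} and \eqref{it:DiSubUnitary}) or sends $\C$-bases to $\C$-bases (for part \eqref{it:DiSubComplexConjugation}), and so gives the stated equality of dimensions.

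For part \eqref{it:DiSubDirectSum}, I would apply the canonical shuffles of \eqref{eq:2x2CanShuf} to identify $L_A(X \oplus Z)$ with $L_A(X) \oplus L_A(Z)$, so that $\ker L_A(X \oplus Z) = \ker L_A(X) \oplus \ker L_A(Z)$. Writing a candidate $\beta \in M_{(n_1+n_2) \times 1}(\K)^g$ as $\begin{pmatrix} (\beta^{(1)})^* & \sigma^* \end{pmatrix}^*$ with $\beta^{(1)} \in M_{n_1 \times 1}(\K)^g$ and $\sigma \in M_{n_2 \times 1}(\K)^g$, the same shuffle splits $\Lambda_A(\beta^*)$ into the horizontal block $\bigl( \Lambda_A((\beta^{(1)})^*) \ \ \Lambda_A(\sigma^*) \bigr)$. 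Testing against vectors supported in only one summand of $\ker L_A(X) \oplus \ker L_A(Z)$ at a time shows $\beta \in \fK^\K_{A, X \oplus Z}$ iff $\beta^{(1)} \in \fK^\K_{A,X}$ and $\sigma \in \fK^\K_{A,Z}$, giving the direct-sum description and additivity of dimension.

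For part \eqref{it:DiSubUnitary}, I would use $L_A(U^*XU) = (I_d \otimes U)^* L_A(X)(I_d \otimes U)$, so $\ker L_A(U^*XU) = (I_d \otimes U^*) \ker L_A(X)$, together with $\Lambda_A((U\beta')^*) = \Lambda_A((\beta')^*)(I_d \otimes U^*)$. Substituting $v = (I_d \otimes U^*) w$ for an arbitrary $w \in \ker L_A(X)$ into the kernel-containment condition at $U^*XU$ transforms it into the condition at $X$ with $\beta'$ replaced by $U\beta'$, so $\beta' \mapsto U\beta'$ is a $\K$-linear bijection between $\fK^\K_{A, U^*XU}$ and $\fK^\K_{A,X}$, whence the dimensions match.

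For part \eqref{it:DiSubComplexConjugation}, rather than argue directly with pencils (where the hypothesis on $\cDAK$ is not reflected transparently), I would invoke Lemma \ref{lem:DiSubProperties}\eqref{it:DiSubDilatesX}. Given $\beta \in \fK^\K_{A,X}$, choose $c > 0$ with $\begin{pmatrix} X & c\beta \\ c\beta^* & 0 \end{pmatrix} \in \cDAK$; its entrywise complex conjugate $\begin{pmatrix} \overline X & c\overline\beta \\ c(\overline\beta)^* & 0 \end{pmatrix}$ again lies in $\cDAK$ by hypothesis, and a second application of Lemma \ref{lem:DiSubProperties}\eqref{it:DiSubDilatesX} gives $\overline\beta \in \fK^\K_{A,\overline X}$. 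Thus $\overline{\fK^\K_{A,X}} \subset \fK^\K_{A,\overline X}$; swapping $X$ and $\overline X$ yields the reverse inclusion, and conjugating both sides gives the stated identity. The main (and really only) place requiring genuine care is the bookkeeping of canonical shuffles in part \eqref{it:DiSubDirectSum}, making sure the block decompositions of $L_A$ and $\Lambda_A$ are taken with respect to the same ordering; beyond that, no substantial obstacle is anticipated.
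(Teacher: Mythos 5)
Your proposal is correct and is essentially the paper's own argument: item \eqref{it:DiSubDirectSum} by splitting the kernel containment across the direct sum, and items \eqref{it:DiSubUnitary}--\eqref{it:DiSubComplexConjugation} via the conjugation covariance of $L_A$ and $\Lambda_A$ (the paper phrases \eqref{it:DiSubUnitary}--\eqref{it:DiSubComplexConjugation} through membership of $2\times 2$ dilations in $\cDAK$ together with Lemma \ref{lem:DiSubProperties} \eqref{it:DiSubDilatesX}, while you treat \eqref{it:DiSubUnitary} directly at the kernel level, an immaterial difference). One small remark: your bijection $\beta'\mapsto U\beta'$ gives $\fK^\K_{A,X}=U\,\fK^\K_{A,U^*XU}$, which is also what the paper's own computation yields, so the $U^*$ in the displayed identity of item \eqref{it:DiSubUnitary} appears to be a typo; the dimension equality, the only part used later, is unaffected.
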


\begin{proof}
The proof of item \eqref{it:DiSubDirectSum} is immediate from the fact that $ \ker L_A (X \oplus Z) \subset \ker \Lambda_A \begin{pmatrix}
\beta^* & \sigma^*
\end{pmatrix}
$ 
if and only if $\ker L_A (X) \subset \ker \Lambda_A (\beta^*)$ and $\ker L_A (Z) \subset \ker \Lambda_A (\sigma^*)$.

To prove item \eqref{it:DiSubUnitary} let $U \in M_n (\K)$ be a unitary and observe that 
\[
\begin{pmatrix} X & \beta \\
\beta^* & \gamma 
\end{pmatrix} \in \cDAK
\quad \iff \quad
\begin{pmatrix}
U^* X U & U^* \beta \\
\beta^* U & \gamma 
\end{pmatrix}=
\begin{pmatrix}
U^* & 0 \\
0 & 1 
\end{pmatrix}
\begin{pmatrix} X & \beta \\
\beta^* & \gamma 
\end{pmatrix}
\begin{pmatrix}
U & 0 \\
0 & 1 
\end{pmatrix} \in \cDAK. 
\]

To prove item \eqref{it:DiSubComplexConjugation}: assume $\cDAK$ is closed under complex conjugation. Then 
\[
\begin{pmatrix} X & \beta \\
\beta^* & \gamma 
\end{pmatrix} \in \cDAK
\quad \iff \quad
\begin{pmatrix} \overline{X} & \overline{\beta} \\
\overline{\beta^*} & \overline{\gamma}
\end{pmatrix} \in \cDAK.
\] \end{proof}

We now give a classification of free spectrahedra which are closed under complex conjugation.

\begin{lem}
\label{lem:ComplexConjAReal}
Let $A$ be a $g$-tuple of $d \times d$ complex self-adjoint matrices. Then  the complex free spectrahedron $\cDAC$ is closed under complex conjugation if and only if there is a $g$-tuple $B$ of real symmetric matrices of size less than or equal to $2d \times 2d$ such that $\cDAC= \cD_B^{\mathbb{C}}$.
\end{lem}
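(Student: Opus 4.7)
The plan is to handle the two directions separately, with the forward (easy) direction following from the fact that conjugation preserves positive semidefiniteness of matrices with real entries, and the reverse direction done by explicitly constructing $B$ from the real/imaginary decomposition of $A$.

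For the easy direction, suppose $B$ is a $g$-tuple of real symmetric matrices of any size with $\cDAC=\cD_B^\C$. For any $X\in\cDAC$ we have $L_B(X)\succeq 0$, and since $B$ has real entries, entry-wise complex conjugation gives $\overline{L_B(X)} = I + \sum_j B_j\otimes\overline{X}_j = L_B(\overline{X})$. Positive semidefiniteness is preserved under complex conjugation, so $L_B(\overline{X})\succeq 0$ and hence $\overline{X}\in\cD_B^\C=\cDAC$.

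For the reverse direction, the first step is to package the closure hypothesis as an equality between free spectrahedra. Since conjugating $L_A(X)\succeq 0$ entry-wise gives $L_{\overline{A}}(\overline{X})\succeq 0$, the set $\cDAC$ is closed under complex conjugation if and only if $\cD_A^\C=\cD_{\overline{A}}^\C$. Taking the direct sum pencil $A\oplus\overline{A}$ yields $L_{A\oplus\overline{A}}(X)=L_A(X)\oplus L_{\overline{A}}(X)$, so under the hypothesis we obtain $\cD_{A\oplus\overline{A}}^\C=\cD_A^\C=\cDAC$.

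Next I would realize $A\oplus\overline{A}$ in a real symmetric form via a standard unitary change of basis. Write $A_j=R_j+iS_j$ with $R_j$ real symmetric and $S_j$ real antisymmetric (this decomposition is forced by $A_j=A_j^*$). Define the $2d\times 2d$ matrices
\[
B_j:=\begin{pmatrix} R_j & -S_j \\ S_j & R_j \end{pmatrix},
\]
which are real symmetric. A direct computation with the unitary $U=\frac{1}{\sqrt 2}\begin{pmatrix} I & iI \\ I & -iI \end{pmatrix}$ shows $U^*(A_j\oplus\overline{A_j})U=B_j$. Conjugating the pencil by $U\otimes I$ preserves positive semidefiniteness, so $\cD_B^\C=\cD_{A\oplus\overline{A}}^\C$. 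Combining with the previous step gives $\cD_B^\C=\cDAC$, and the size of $B$ is $2d$ as required.

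The main substantive point, and the one I would want to double-check, is the equivalence between closure under complex conjugation and the identity $\cD_A^\C=\cD_{\overline{A}}^\C$; everything else is bookkeeping with the Schur-like realification $A\mapsto\widetilde A$ and the fact that unitary conjugation of the pencil leaves the free spectrahedron invariant. Note that when $A$ is already real symmetric one may of course take $B=A$, so the bound ``$\leq 2d$'' in the statement is consistent with the trivial case.
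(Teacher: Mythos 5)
Your proof is correct and follows essentially the same route as the paper: both package the closure hypothesis as $\cD_A^\C=\cD_{A\oplus\overline A}^\C$, write $A_j=R_j+iS_j$, and realify via a unitary conjugation to $B_j=\begin{pmatrix} R_j & -S_j\\ S_j & R_j\end{pmatrix}$. The only differences are cosmetic: you use a slightly different (equally valid) unitary and spell out the converse direction that the paper dismisses as straightforward.
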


\begin{proof}
We first prove the forwards direction. Let $X$ be a $g$-tuple of complex self-adjoint matrices. Since $\cDAC$ is closed under complex conjugation we know that $X \in \cDAC$ if and only if
\beq
\label{eq:ConjToA}
L_A (X) \succeq 0 \quad \mathrm{and} \quad L_{\overline{A}} (X) \succeq 0.
\eeq
Thus $X \in \cDAC$ if and only if $L_{A \oplus \overline{A}} (X) \succeq 0.$

Write $A=S+iT$ where $S$ is a tuple of $n \times n$ real symmetric matrices and $T$ is a tuple of $n \times n$ real skew symmetric matrices. Then $A \oplus \overline{A}$ is unitarily equivalent to the $g$-tuple of real  symmetric matrices $B$ defined by
\beq
\label{eq:ComplexToReal}
B:=\begin{pmatrix}
S & -T \\
T & S
\end{pmatrix}=U^*
\begin{pmatrix}
S+iT & 0 \\
0 & S-iT
\end{pmatrix}
U
\eeq
where $U \in M_{2n} (\C)$ is the unitary
\[
U= \frac{\sqrt{2}}{2}
\begin{pmatrix}
I_n & i I_n \\
i I_n & I_n 
\end{pmatrix}. 
\] We conclude that $X \in \cDAC$ if and only if
\[
L_{B} (X) \succeq 0.
\]
It follows that $\cDAC=\cD_B^{\mathbb{C}}$.

The converse is straightforward. \end{proof}

We are now in position to complete the proof of the Theorem \ref{thm:AbsSpanBound}.

\noindent{\it Proof of Theorem \ref{thm:AbsSpanBound}.} 
Let $\cDAC$ be a compact complex free spectrahedron which is closed under complex conjugation and let $X \in \cDAC (n)$. In light of Lemma \ref{lem:ComplexConjAReal}, we may without loss of generality assume that $A$ is a $g$-tuple of real symmetric matrices. Set $\ell= \dim \fK^\C_{A,X}.$ If $X$ is an element of $\cDAR$, that is, if $X$ is a tuple of real symmetric matrices, then the proof that $X$ dilates to an Arveson extreme point $Y \in \cDAC(n+k)$ for some integer $k \leq \ell$ is immediate from Theorem \ref{theorem:1DiaReduceKerConDim} with Lemma \ref{lem:RealArvIsComplexArv}. 

 To handle the general case where $\mathrm{Im}(X) \neq 0$, write $X=S+iT$ where $S$ is a $g$-tuple of $n \times n$ real symmetric matrices and $T$ is a $g$-tuple of $n \times n$ real skew symmetric matrices. By assumption $\cDAC$ is closed under complex conjugation so we know $S-iT \in \cDAC$. As shown in equation \eqref{eq:ComplexToReal}, the tuple $(S+iT) \oplus (S-iT)$ is unitarily equivalent to the tuple $Z \in \cDAC (2n)$ defined by
\[
Z:=\begin{pmatrix}
S & -T \\
T & S 
\end{pmatrix}.
\]
It follows that $X$ is a compression of $Z$. 

Observe that $Z$ is a tuple of $2n \times 2n$ real symmetric matrices so $Z \in \cDAC$ implies $Z \in \cDAR$. Furthermore, an application of Lemma \ref{lem:DiSubEqsAndDims} shows that $\dim \fK^\C_{A,Z}=2 \ell$, hence $\dim \fK^\R_{A,Z} \leq 2 \ell$. Theorem \ref{theorem:1DiaReduceKerConDim} shows that $Z$ dilates to an Arveson extreme point $\tilde{Z} \in \cDAR (2n+k)$ for some integer $k \leq 2\ell \leq 2ng$ and Lemma \ref{lem:RealArvIsComplexArv} implies that $\tilde{Z}$ is an Arveson extreme point of $\cDAC$. It follows that $X$ is a compression of the Arveson extreme point $\tilde{Z}.$

As in the real case, the proof that $\cDAC$ is the matrix convex hull of its absolute extreme points is given immediately after the statement of Theorem \ref{thm:AbsSpanBound}.$\hfill\qedsymbol$

\section{Remarks}
\label{sec:Remarks}

This section contains remarks which expand on the historical context of our results. Section \ref{sec:FreeCaratheodory} discusses the number of parameters needed to express a tuple as a matrix convex combination of absolute extreme points, while Section \ref{sec:GeneralMatConv} explores the relationship between the absolute extreme points of free spectrahedra and of general matrix convex sets. Section \ref{sec:AltContext} discusses infinite dimensional operator convex sets in Arveson's original context.

\subsection{Parameter counts for (matrix) convex combinations of extreme points}
\label{sec:FreeCaratheodory}
The classical Caratheodory Theorem gives an upper bound on how many terms are required to represent an element of a convex set as a convex combination of its extreme points. Theorem \ref{thm:AbsSpanBound} is the analog of this for a free convex set. In addition to giving a bound on the number of absolute extreme points needed to express an arbitrary tuple $X \in \cDAK(n)$, Theorem \ref{thm:AbsSpanBound} gives a bound on the number of parameters needed to express the absolute extreme points appearing in the matrix convex combination for $X$.

Given a compact free spectrahedron $\cDAK$, the classical Caratheodory Theorem states that a tuple $X \in \cDAK (n) \subset \smnkg$ can be written as a convex combination of 
$\dim \smnkg+1$ classical extreme points of $\cDAK (n)$, each an element of $\smnkg$. The maximum number of parameters in the extreme points required by this classical representation is
\[
(\dim \smnkg+1)(\dim \smnkg)=(n(n+1)g/2+1)(n(n+1)g/2)=O(n^4g^2).
\]

In contrast, Theorem \ref{thm:AbsSpanBound} shows that $X \in \cDAK(n)$ can be written as a matrix convex combination of a single Arveson extreme point $Y \in \cDAK (n+k)$ for some integer  $k \leq 2ng+n$. The maximum parameter count on the Arveson extreme point required in this dimension free representation is
\[
\dim \smspecialtwokg= 2(n+ng)(n+ng+1)g=O(n^2g^3).
\]

This suggests that matrix convex combinations are advantageous over classical convex combinations in terms of the number of parameters needed to store the representation of a tuple as a (matrix) convex combination of extreme points  when $n$ is large but that they are disadvantageous if $g$ is large.

\subsection{Absolute extreme points of general matrix convex sets}
\label{sec:GeneralMatConv}

Let $K \subset \smkg$ be a  compact matrix convex set. It is well known that there is a Hilbert space $\cH$ and a self-adjoint operator $\mathcal{A} \in B(\cH)$ such that $K= \cD^\K_{\cA}$, i.e., 
\[
K=\{X \in \smkg \ | \ L_{\cA} (X) \succeq 0 \},
\]
where $L_\cA (X)$ is defined as in the introduction \cite{EW97}. 

While Theorem \ref{thm:AbsSpanBound} shows every compact real free spectrahedron $\cDAR$ is spanned by its absolute extreme points, \cite[Theorem 1.2]{E17} shows the existence of a compact real matrix convex set $\cD^\R_{\mathcal{A}}$ which has no finite dimensional absolute extreme points. 

The critical failure of our proof for a general matrix convex set $\cD^\R_{\cA}$ occurs at equation \eqref{eq:3diaKerContain} in Theorem \ref{theorem:1DiaReduceKerConDim}. In Theorem \ref{theorem:1DiaReduceKerConDim} the tuple $A$ is finite dimensional, while $\cA$ being discussed here in Section \ref{sec:GeneralMatConv} is a tuple of operators acting on $\cH$ which may be infinite dimensional. Thus, the kernel containment
\[
\ker [L_{\cA} (\hgamma)-Q] \subset
\ker [\Lambda_\cA (\sigma)-cQ]
\]
along with
\[
L_{\cA} (\hgamma)-Q \succeq 0
\]
does not imply the existence of a real number $\alpha>0$ such that
\[
L_{\cA} (\hgamma)-Q \pm \alpha(\Lambda_\cA (\sigma)-cQ) \succeq 0.
\]
Here $Q=\Lambda_\cA (\hbeta^*) L_\cA (X)^\dagger \Lambda_\cA (\hbeta)$ similar to before.

A concrete example of this failure follows. Let $\cH=\ell^2 (\bbN)$, let $M=\mathrm{diag} (1/n^2)\in B(\cH),$ and let $N=\mathrm{diag} (1/n) \in B(\cH).$ Then $M \succeq 0$ and $\{0\}=\ker M \subseteq \ker N$, however $M-\alpha N \not\succeq 0$ for any real number $\alpha>0$.

\subsection{Alternative contexts}
\label{sec:AltContext}

Much of the literature such as \cite{A69}, \cite{DM05}, and \cite{DK15} referred to in the introduction takes a different viewpoint than the one here. We now briefly describe the correspondence.

Operator convex sets are in one to one correspondence with the set of completely positive maps on an operator system \cite{WW99}, an area which has received great interest over the last several decades. Under this correspondence, an absolute extreme point of an operator convex set becomes a boundary representation of an operator system \cite{KLS14}.

Arveson's original question was phrased in the setting of completely positive maps on an operator system. In this language, Arveson conjectured that every operator system has sufficiently many boundary representations to ``completely norm it". Additionally, Arveson conjectured that these boundary representations generate the $C^*$-envelope. Roughly speaking, the $C^*$-envelope of an operator system is the ``smallest"  $C^*$-algebra containing that operator system \cite{P02}. In this language, Theorem \ref{thm:AbsSpanMin} shows that every operator system with a finite-dimensional realization (see \cite{FNT+}) is completely normed by its finite dimensional boundary representations. For further material related to operator systems, completely positive maps, boundary representations, and the $C^*$-envelope we direct the reader to \cite{Ham79}, \cite{D96}, \cite{MS98}, \cite{F00}, \cite{F04}, \cite{FHL+}, and \cite{PSS+}.

\newpage

\section{Appendix}

The appendix contains an NC LDL$^*$ formula and the proof of Theorem \ref{theorem:EHKMRealComplex} over the reals. 

\subsection{The NC LDL$^*$ of block $3 \times 3$ matrices}
\label{sec:NCLDL}

This subsection contains a brief discussion of the NC LDL$^*$ decomposition of the evaluation of a linear pencil $L_A$ on a block $3 \times 3$ matrix. Consider a general block $3 \times 3$ tuple
\[
Z:=\begin{pmatrix}
X & \beta & \eta \\
\beta^* & \gamma & \sigma \\
\eta^* & \sigma^* & \psi
\end{pmatrix} 
\]
where $X \in \smnonekg$ and  $\gamma \in \smntwokg$ and $\psi \in \smnthreekg$ and $\beta, \eta,$ and $\sigma$ are each $g$-tuples of matrices of appropriate size. We know that
\[
L_A \begin{pmatrix}
X & \beta & \eta \\
\beta^* & \gamma & \sigma \\
\eta^* & \sigma^* & \psi
\end{pmatrix} \sim_{\mathrm{c.s.} }
\begin{pmatrix} 
L_A(X) & \Lambda_A (\beta) & \Lambda_A (\eta) \\
\Lambda_A (\beta^*) & L_A (\gamma) & \Lambda_A (\sigma) \\
\Lambda_A (\eta^*) & \Lambda_A (\sigma^*) & L_A (\psi)
\end{pmatrix}=:\mathfrak{Z}
\]
where $\sim_{\mathrm{c.s.}}$ denotes equivalence up to permutations (canonical shuffles). It follows that
\[
L_A (Z) \succeq 0 \mathrm{\ if \ and \ only \ if \ }
\mathfrak{Z} \succeq 0.
\]

The NC LDL$^*$ of $\mathfrak{Z}$ has as its block diagonal factor $D$ the matrix
\[
D=\begin{pmatrix}
L_A(X) & 0 & 0\\
0 & S & 0 \\
0 & 0 & L_A (\gamma)-\Lambda_A (\beta^*) L_A (X)^\dagger \Lambda_A (\beta)-W^* S^\dagger W
\end{pmatrix}
\]
where
\[
\begin{array}{rcl}
S&=& L_A (\psi)- \Lambda_A (\eta^*) L_A(X)^\dagger \Lambda_A (\eta) \\
W&=& \Lambda_A (\sigma^*)-\Lambda_A(\eta^*) L_A (X)^\dagger \Lambda_A (\beta).
\end{array}
\]
It follows that $L_A (Z) \succeq 0$ if and only if $L_A (X) \succeq 0$ and $S \succeq 0$ and
\[
L_A (\gamma)-\Lambda_A (\beta^*) L_A (X)^\dagger \Lambda_A (\beta)-W^* S^\dagger W \succeq 0.
\]

Considering the case where $\K=\R$ and $\gamma \in \R^g$ and $\psi=0 \in \R^g$, hence $\sigma=\sigma^* \in \R^g$, and substituting $\eta=c \hbeta$ or $\eta=0$ gives equations \eqref{eq:3LDL} and \eqref{eq:3LDLSig}, respectively.

\subsection{Proof of Theorem \ref{theorem:EHKMRealComplex} over the real numbers}
\label{sec:EHKMReal}

We now give a proof of Theorem \ref{theorem:EHKMRealComplex}  over the real numbers. To emphasize the real setting in this subsection we will now use the terms symmetric and orthogonal in favor of self-adjoint and unitary. Recall that a tuple $X \in \smnrg$ is irreducible over $\R$ if the matrices $X_1, \dots, X_g$ have no common reducing subspaces in $\R^n$; a tuple is reducible over $\R$ if it is not irreducible over $\R$. 

\begin{lem}
\label{lem:RealIrredSymCommutant}
Let $X \in \smnrg$ be a $g$-tuple of real symmetric matrices which is irreducible over $\R$ and let $W \in SM_n (\R)$ be a real symmetric matrix which commutes with $X$. Then $W$ is a constant multiple of the identity. 
\end{lem}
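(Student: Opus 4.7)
The plan is to mimic the standard Schur-type argument, using the real spectral theorem to compensate for the fact that we are working over $\R$ rather than $\C$. First I would invoke the spectral theorem for real symmetric matrices to decompose
\[
\R^n = E_{\lambda_1} \oplus \cdots \oplus E_{\lambda_k},
\]
an orthogonal direct sum of eigenspaces of $W$ corresponding to distinct real eigenvalues $\lambda_1, \dots, \lambda_k$.

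Next, since $W$ commutes with each $X_i$, each eigenspace $E_{\lambda_j}$ is invariant under every $X_i$ (if $Wv = \lambda_j v$, then $W X_i v = X_i W v = \lambda_j X_i v$). Because the eigenspaces are mutually orthogonal, the orthogonal complement $E_{\lambda_j}^\perp$ is the direct sum of the remaining eigenspaces, which is therefore also $X_i$-invariant. Hence each $E_{\lambda_j}$ is a common reducing subspace for $X_1, \dots, X_g$ in $\R^n$. Irreducibility of $X$ over $\R$ forces $E_{\lambda_j} \in \{\{0\}, \R^n\}$, so there is only one eigenvalue and $W = \lambda_1 I_n$.

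There is no real obstacle in this argument; the content of the lemma is precisely that the symmetry hypothesis on $W$ rules out the familiar failure of Schur's lemma over the reals (where the commutant of an irreducible real family can be isomorphic to $\C$ or $\HH$ because a general real matrix need not have real eigenvalues). The symmetry of $W$ is what upgrades the invariant subspace argument to a reducing subspace argument via the real spectral theorem.
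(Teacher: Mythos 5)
Your proof is correct and follows essentially the same route as the paper's: decompose $\R^n$ into the eigenspaces of the symmetric matrix $W$, use the commutation relation to see that each eigenspace is a common reducing subspace for $X_1,\dots,X_g$, and conclude from irreducibility over $\R$ that $W$ has a single eigenvalue, hence $W=\lambda_1 I$. Your write-up is only slightly more detailed (spelling out invariance of the orthogonal complement), but the idea and structure coincide with the paper's proof.
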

\begin{proof}
Let $W \in SM_n (\R)$ be a real symmetric matrix such that $WX=XW$ and let $\cE_1, \dots, \cE_k \subset \R^n$ denote the real eigenspaces of $W$ corresponding to the eigenvalues $\lambda_1, \dots, \lambda_k$ of $W$, respectively.  Since $X$ is real and $WX=XW$, each $\cE_j$ is a reducing subspace for $X$. If $k>2$, then each $\cE_j$ is a nontrivial real reducing subspace of $X$ which would imply that $X$ is reducible over $\R$. It follows that $k=1$ and $W= \lambda_1 I$. 
\end{proof}

We now prove  Theorem \ref{theorem:EHKMRealComplex} which is our real analogue of \cite[Theorem 1.1 (3)]{EHKM17}, Theorem \ref{theorem:EHKMRealComplex}. 

The proof over $\R$ follows exactly the proof over $\C$ in \cite{EHKM17} as we now outline. That an irreducible Arveson extreme point is absolute extreme is a simple argument given in \cite[Section 3.4]{EHKM17} based on \cite[Lemma 3.14]{EHKM17} which (over $\R$) says the following.

\begin{lemma}
 \label{lem:irreduciblegives}
Fix positive integer $n$ and $m$  and suppose  $C \in \R^{n \times m}$ is a nonzero matrix,  the tuple $X\in\smnrg$  is irreducible over $\R$  and $E\in \smmrg$.
If  $CX_j = E_j C$ for each $j,$  then $C^TC$
 is a nonzero multiple of
 the identity. Moreover, the range of
 $C$ reduces the set $\{E_1,\dots,E_g\}$ so there is an orthogonal matrix $U$ so that for each $j$ we have  $U^T E_j U= X\oplus Z_j$ for some $Z_j \in SM_k (\R),$ where $k=m-n$.
\end{lemma}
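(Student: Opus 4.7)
\textbf{Proof plan for Lemma \ref{lem:irreduciblegives}.} The plan is to run the classical intertwining argument and reduce everything to the already-proved Lemma \ref{lem:RealIrredSymCommutant}. The key object is $W := C^T C \in SM_n(\R)$, and the first step will be to show that $W$ commutes with each $X_j$. From the intertwining relation $CX_j = E_j C$ and the symmetry of $X_j$ and $E_j$, taking transposes gives $X_j C^T = C^T E_j$. Multiplying the original relation on the left by $C^T$ and the transposed relation on the right by $C$ yields
\[
W X_j = C^T C X_j = C^T E_j C = X_j C^T C = X_j W,
\]
so $W$ is a real symmetric matrix commuting with the irreducible tuple $X$. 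Lemma \ref{lem:RealIrredSymCommutant} then forces $W = \lambda I_n$ for some $\lambda \in \R$, and since $C \neq 0$ we have $W \succ 0$, so in fact $\lambda > 0$.

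Next I would normalize: set $V := \lambda^{-1/2} C$, so that $V^T V = I_n$ and $V$ still satisfies $V X_j = E_j V$ for every $j$. Thus $V$ is an isometric embedding of $\R^n$ into $\R^m$ whose range $\mathcal{R} := \mathrm{ran}(V)$ is invariant under every $E_j$; because each $E_j$ is symmetric, $\mathcal{R}^\perp$ is also invariant, i.e., $\mathcal{R}$ reduces the tuple $E = (E_1, \dots, E_g)$. Pick any orthonormal basis for $\mathcal{R}^\perp$, assembled as the columns of some $V' \in \R^{m \times k}$ with $k = m - n$, and form the orthogonal matrix $U := \begin{pmatrix} V & V' \end{pmatrix} \in O(m)$.

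Finally, a block computation gives the asserted decomposition. The off-diagonal blocks $V^T E_j V'$ and $V'^T E_j V$ vanish because $\mathcal{R}$ is $E_j$-invariant, and the $(1,1)$-block is
\[
V^T E_j V = V^T V X_j = X_j,
\]
using $E_j V = V X_j$ and $V^T V = I_n$. Setting $Z_j := V'^T E_j V' \in SM_k(\R)$ then yields $U^T E_j U = X_j \oplus Z_j$ for each $j$, which is exactly the claim. The only real content is the commutation step combined with Lemma \ref{lem:RealIrredSymCommutant}; the rest is bookkeeping, and I do not anticipate any genuine obstacle in the real case since the argument mirrors the complex one almost verbatim, the only replacement being \emph{orthogonal} in place of \emph{unitary} and the use of Lemma \ref{lem:RealIrredSymCommutant} in place of Schur's lemma over $\C$.
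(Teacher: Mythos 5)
Your proposal is correct and follows essentially the same route as the paper: transpose the intertwining relation to get $X_j C^T = C^T E_j$, deduce that $C^T C$ commutes with each $X_j$, invoke Lemma \ref{lem:RealIrredSymCommutant} to conclude $C^T C = \lambda I$ with $\lambda > 0$, rescale $C$ to an isometry whose range reduces the symmetric tuple $E$, and read off the block decomposition. The paper states the tail of the argument more tersely (it does not write out the orthogonal completion $U = \begin{pmatrix} V & V'\end{pmatrix}$ or the $(1,1)$-block computation), but the content is identical.
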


\begin{proof}
To prove this statement note that $
   X_j C^T = C^T E_j.
$
 It follows that
\[
 X_j C^TC = C^T E_j C = C^T C X_j.
\]
 Using Lemma \ref{lem:RealIrredSymCommutant} with the irreduciblity of $\{X_1,\dots,X_g\}$ shows $C^TC$ is a nonzero multiple
 of the identity and therefore  $C$ is a real multiple of an isometry.
 Further, since $CX=EC$, the range of $C$ is invariant for $E$.  Since each $E_j$
 is symmetric, the range of $C$ reduces each $E_j$ and $C$, as an isometric mapping
 into its range is a multiple of an orthogonal matrix.
\end{proof}

\noindent{\it Proof of Theorem \ref{theorem:EHKMRealComplex} when $\K=\R$.}

 Suppose $X$ is both irreducible over $\R$ and in the Arveson boundary of $\cDAR$.  To
  prove $X$ is an absolute extreme point, suppose
$
   X= \sum_{i=1}^\nu C_i^T E^i C_i,
$
 where each $C_i$ is nonzero, $\sum_{i=1}^\nu C_i^T C_i =I$ and $E^i \in \cDAR$. In this case, let
\[
   C= \begin{pmatrix} C_1 \\ \vdots \\ C_\nu \end{pmatrix}
\quad\text{ and }\quad
  E=
E^1\oplus E^2\oplus\cdots\oplus E^\nu
\]
 and observe that $C$ is an isometry and $X=C^T E C$.
   Hence, as $X$ is in the Arveson boundary,
  $CX=EC$. It follows that $C_i X_k = E^i_k C_i$ for each $i$ and $k$.  
  Thus, by Lemma \ref{lem:irreduciblegives},
  it follows that for each $i$ there is an orthogonal matrix $U_i$  such that $U_i^T E^i U_i= X\oplus Z^i$ for some $Z^i\in \cDAR$.
Therefore $X$ is an 
absolute
extreme point of $\cDAR$.

The converse proof that an absolute extreme point of $\cDAR$ is irreducible over $\R$ and Arveson extreme is \cite[Lemma 3.11]{EHKM17} and \cite[Lemma 3.13]{EHKM17} which while stated over $\C$ is unchanged over $\R$. \hfill \qedsymbol

\newpage

\centerline{ NOT FOR PUBLICATION}

\tableofcontents

\printindex

\end{document}